\documentclass[12pt]{amsart}

\usepackage{amsfonts,amsmath,amsthm,amscd,amssymb,latexsym}
\usepackage[english]{babel}

\setlength{\voffset}{-20mm}

\setlength{\hoffset}{-22mm}

\setlength{\textwidth}{175mm}

\setlength{\textheight}{250mm}

\theoremstyle{plain}

\newtheorem{thm}{Theorem}
\newtheorem{lem}[thm]{Lemma}
\newtheorem{prop}[thm]{Proposition}
\newtheorem{cor}[thm]{Corollary}
\newtheorem{quest}{Question}
\newtheorem{remark}{Remark}
%\newdefinition{defn}{Definition}
%\newdefinition{exm}{Example}
%\newproof{pf}{Proof}

\newtheorem{thmm}{Theorem}

\sloppy
\tolerance=3000

\numberwithin{equation}{section}

\begin{document}

\title{Limits of sequences of continuous functions depending on finitely many coordinates}
%On Baire class one functions on perfectly normal countable products
\author{Olena Karlova}
%\ead{maslenizza.ua@gmail.com}
\author{Volodymyr Mykhaylyuk}
%\ead{vmykhalyuk@ukr.net}

\address{Department of Mathematical Analysis, Faculty of Mathematics and Informatics, Yurii Fedkovych Chernivtsi National University, 58000 Kotsyubyns'koho str., 2, Chernivtsi, Ukraine}

\begin{abstract}
 We answer two questions from {\it V.Bykov, On Baire class one functions on a product space, Topol. Appl. {199} (2016) 55--62,} and prove that
every Baire one function on a subspace of a countable perfectly normal product is the pointwise limit of a sequence of continuous functions, each depending on finitely many coordinates. It is proved also that a lower semicontinuous function on a subspace of a countable perfectly normal product is the pointwise limit of an increasing sequence of continuous functions, each depending on finitely many coordinates, if and only if the function has a minorant which depends on finitely many coordinates.
\end{abstract}

%\begin{keyword}
%Baire class one; dependence of finitely many coordinates; semicontinuous function

% Primary 54C30; Secondary 26A21
%\end{keyword}

\maketitle

\section{Introduction}
The collection of all continuous maps between topological spaces $X$ and $Y$ is denoted by ${\rm C}(X,Y)$.
If $A\subseteq Y^X$, then we denote the family of all  pointwise  limits of sequences of maps from $A$ by $\overline{A}^{\,{\rm p}}$.
We say that a subset  $A$ of a topological space $X$ is {\it functionally $F_\sigma$ ($G_\delta$)}, if it is a union (an intersection) of a sequence of functionally closed (functionally open) sets in $X$.

Recall that a map $f:X\to Y$ belongs to the {\it first Lebesgue class}, if the preimage $f^{-1}(V)$ of each open set $V\subseteq Y$ is an $F_\sigma$-set in $X$, and belongs to the {\it first Baire class}, if $f\in \overline{{\rm C}(X,Y)}^{\,{\rm p}}$. We denote the collection of all Lebesgue one maps and Baire one maps between $X$ and $Y$ by ${\rm H}_1(X,Y)$ and ${\rm B}_1(X,Y)$, respectively. If a Lebesgue (Baire) one map $f:X\to Y$ takes finitely many values, then we say that $f\in {\rm H}_1^0(X,Y)$ (${\rm B}_1^0(X,Y)$). Let us observe that ${\rm H}_1(X,\mathbb R)={\rm B}_1(X,\mathbb R)$ for a perfectly normal space $X$ (see \cite[\S 31]{Kuratowski:Top:1}).

Properties of functions defined on products of topological spaces are tightly connected with the dependence of such functions on some sets of coordinates. The dependence of continuous mappings on some sets of coordinates was investigated by many mathematicians (see, for instance \cite{Mazur}, \cite{Engelking}, \cite{Noble-Ulmer} and the literature given there). However,  the dependence on countably many coordinates serves as a convenient tool for the investigation of Namioka property of separately continuous maps and their analogs~\cite{Mykhaylyuk}.  Functions depending locally on finitely many coordinates plays an important role in the theory of smoothness and renorming on Banach spaces~\cite{Hajek}.

Let  $(X_n)_{n=1}^\infty$ be a sequence of topological spaces, $P=\prod_{n=1}^\infty X_n$ and  $a=(a_n)_{n\in\mathbb N}\in P$ be a point. For every $n\in\mathbb N$ and $x\in P$ we put  $p_n(x)=(x_1,\dots,x_n,a_{n+1},a_{n+2},\dots)$.
We say that a set $A\subseteq P$  {\it depends on finitely many coordinates}, if there exists $n\in\mathbb N$ such that for all  $x\in A$ and $y\in P$ the equality $p_n(x)=p_n(y)$ implies $y\in A$. A map $f:X\to Y$ defined on a subspace $X\subseteq P$ is {\it finitely determined} if it {\it depends on finitely many coordinates}, i.e., $f(x)=f(y)$ for all  $x,y\in X$ with  $p_n(x)=p_n(y)$. We denote by ${\rm CF}(X,Y)$ the set of all continuous finitely determined maps between $X$ and $Y$; we write ${\rm CF}(X)$ for ${\rm CF}(X,\mathbb R)$.

Vladimir Bykov in his recent paper \cite{Bykov} proved the following results.
\begin{thmm}\label{thm:Bykov}
Let $X$ be a subspace of a product $P=\prod_{n=1}^\infty X_n$ of a sequence of metric spaces $X_n$.  Then
\begin{enumerate}
  \item\label{stat:1} every Baire class one function $f:X\to\mathbb R$ is the pointwise limit of a sequence of functions in ${\rm CF}(X)$;

  \item\label{stat:2} a lower semicontinuous function $f:X\to\mathbb R$ is the pointwise limit of an increasing sequence of functions from ${\rm CF}(X)$ if and only if $f$ has a minorant in ${\rm CF}(X)$.
\end{enumerate}
\end{thmm}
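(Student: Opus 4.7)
\medskip

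\noindent\textbf{Proof plan.}
The cornerstone is an approximation lemma asserting ${\rm C}(X)\subseteq\overline{{\rm CF}(X)}^{\,{\rm p}}$. Equip $P$ with the product metric $d(x,y)=\sum_k 2^{-k}\min(1,d_k(x_k,y_k))$, so that two points of $P$ agreeing on their first $n$ coordinates differ by at most $2^{-n}$. Given continuous $g\colon X\to\mathbb{R}$ and fixed $n$, the finite product $X_1\times\cdots\times X_n$ is metric and hence paracompact; I pick a locally finite open cover $\{V_\alpha^n\}$ of $\pi_n(X)$ of diameter at most $2^{-n}$ with a continuous partition of unity $\{\varphi_\alpha^n\}$ subordinate to it, choose a point $x_\alpha^n\in U_\alpha^n:=\pi_n^{-1}(V_\alpha^n)\cap X$, and set
\[
h_n(x)\;=\;\sum_\alpha g(x_\alpha^n)\,\varphi_\alpha^n(\pi_n(x)).
\]
Each summand depends only on the first $n$ coordinates of $x$, so $h_n\in{\rm CF}(X)$; and every $\alpha$ contributing to $h_n(x)$ satisfies $d(x,x_\alpha^n)\le 2^{1-n}$, whence $h_n(x)\to g(x)$ by continuity of $g$.

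To extend part~(1) to general Baire one $f$, I write $f=\lim_n g_n$ with $g_n\in{\rm C}(X)$, apply the lemma to each $g_n$ to obtain $h_{n,k}\to g_n$ pointwise as $k\to\infty$, and extract a suitable diagonal $h_{n,k(n)}$. The precaution needed is to coordinate, at step $k$, the choice of the cover with the moduli of continuity of $g_1,\dots,g_k$ at every point of $X$, so that the error $|h_{n,k}(x)-g_n(x)|$ is controlled well enough in $n\le k$ for the diagonal choice $k(n)=n$ to yield $h_{n,n}(x)\to f(x)$ pointwise. Reconciling the non-uniform, pointwise continuity moduli of the entire sequence $(g_n)$ with the globally cylindrical structure required of the partitions of unity is the main technical obstacle of part~(1); it is here that the metric structure of $P$, rather than mere perfect normality, plays a role in Bykov's argument.

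For part~(2), necessity is immediate (take $h_1$). For sufficiency, given lower semicontinuous $f\ge h_0\in{\rm CF}(X)$, I use metrizability of $X$ to write $f=\sup_n g_n$ as an increasing supremum of continuous functions with $h_0\le g_n\le f$. I then apply a \emph{from-below} variant of the approximation lemma, taking the sample value on each $U_\alpha^k$ to be (just below) $\inf_{U_\alpha^k}g_n$, which is finite because $h_0$ is cylindrical of fixed depth $n_0$ and hence bounded below on $U_\alpha^k$ once $k\ge n_0$. This produces $h_{n,k}\in{\rm CF}(X)$ with $h_{n,k}\le g_n$ and $h_{n,k}\to g_n$; replacing $h_{n,k}$ by $\max(h_{n,1},\dots,h_{n,k})$ makes the convergence monotone in $k$, and then $H_n:=\max(h_0,h_{1,n},\dots,h_{n,n})\in{\rm CF}(X)$ is increasing, bounded above by $f$, with $\sup_n H_n=\sup_n g_n=f$. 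The main obstacle here is the from-below step: arranging that the relevant infima are finite and that the resulting CF approximations still climb up to $g_n$, which is exactly where the CF minorant hypothesis enters.
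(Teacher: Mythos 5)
Your cylinder--partition-of-unity lemma (continuous functions on $X\subseteq P$ are pointwise limits of ${\rm CF}(X)$ functions) is fine, but for part~(1) it only gets you to a double limit $f=\lim_n\lim_k h_{n,k}$, and the passage from this to a single pointwise limit is exactly the content of the theorem --- and it is the step you leave unresolved ("coordinate the covers with the moduli of continuity ... is the main technical obstacle"). No generic diagonal extraction exists for pointwise convergence, and your proposed mechanism cannot work as described: the error $|h_{n,n}(x)-g_n(x)|$ is controlled by the oscillation of $g_n$ on a $2^{1-n}$-ball around $x$, the covers at stage $n$ must be chosen before seeing $x$, and the pointwise moduli of continuity of the $g_n$ at a given $x$ may degrade arbitrarily fast with $n$; so there is no reason the diagonal converges at $x$. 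The proofs that do work avoid iterating pointwise limits: Bykov (and, in the generalization proved in this paper, Lemma~\ref{cor:char_closed_stableCF} and Proposition~\ref{thm:char_amb_stableCF}) first transfers the problem to a single pseudometric $d_{\mathcal F}$ generated by countably many ${\rm CF}$ functions, where the approximants are obtained by composing with the projections $p_k$, and then reduces $f$ to functions with finitely many (or countable discrete) values and handles the remaining \emph{uniform} limit, which is the kind of limit one can afford to iterate (Lemma~\ref{lem:unif_lim} plus the R-space retraction argument of Theorem~\ref{thm:unif_lim_Delta}). Note also that in this paper the statement itself is Bykov's theorem quoted as background; the paper's route to it is through the more general Theorem~\ref{thm:L_one_is_CF_limit} (with $Y=\mathbb R$, using that a countable product of metric spaces is metrizable, hence perfectly normal, so ${\rm H}_1={\rm B}_1$) and the Corollary to Theorem~\ref{thm:semicont}.

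For part~(2) there is a concrete error in the from-below step: the claim that the ${\rm CF}$ minorant $h_0$, being cylindrical of depth $n_0$, is "bounded below on $U_\alpha^k$ once $k\ge n_0$" is false --- a continuous function can be unbounded on a set of arbitrarily small diameter in a non-compact metric space (e.g.\ $1/t$ on $(0,2^{-k}]$), so the sampled infima $\inf_{U_\alpha^k}g_n$ may be $-\infty$ and your $h_{n,k}$ need not be defined. The correct use of the minorant is simpler and is what the paper does: replace $f$ by $f-h_0\ge 0$, so all infima are automatically finite, and then run an inf-convolution $g_n(x)=\inf_{z\in X}\bigl(f(z)+n\,d_n(x,z)\bigr)$ with pseudometrics $d_n$ built from finitely many ${\rm CF}$ functions (Theorem~\ref{thm:semicont}); this produces an increasing sequence in ${\rm CF}(X,[0,+\infty))$ converging to $f-h_0$ directly, bypassing both the from-below partition-of-unity lemma and the need to first write $f$ as $\sup_n g_n$ of continuous functions. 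Your assembly step $H_n=\max(h_0,h_{1,n},\dots,h_{n,n})$ would be fine once the approximation lemma is repaired, but as written the proposal does not establish either part.
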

The following questions were formulated in \cite{Bykov}:
\begin{quest}\label{question:Bykov1}{\rm \cite[Remark 1]{Bykov}}  Does the conclusion~(\ref{stat:1}) of Theorem~\ref{thm:Bykov} holds for   an arbitrary completely regular space $X$?
\end{quest}

\begin{quest}\label{question:Bykov2}{\rm \cite[Remark 2]{Bykov}}  Does the conclusion~(\ref{stat:2}) of Theorem~\ref{thm:Bykov} holds for an arbitrary perfectly normal space $X$?
\end{quest}

We show that that the answer on Question~\ref{question:Bykov1} is negative (see Theorem~\ref{th:example}). We prove that every Lebesgue one function $f:X\to Y$ is a pointwise limit of a sequence of functions from ${\rm CF}(X,Y)$ if $Y$ is a path-connected separable metric R-space (in particular, a convex subset of a normed separable space) and one of the following properties holds:  $X$ is a Lindel\"{o}f subspace of a completely regular product $P$, or $X$ is a subspace of  perfectly normal $P$ (see Theorem~\ref{thm:L_one_is_CF_limit}), or $X=P$ and $P$ is pseudocompact (see Theorem~\ref{thm:pseudocomp}). Theorem~\ref{thm:semicont} gives the positive answer on Question~\ref{question:Bykov2}.

\section{Approximation of functions with finitely many values}\label{sec:approx_simple}

Let $X$ be a topological space. For a countable family $\mathcal F=(f_n:n\in\mathbb N)$ of continuous functions  $f_n:X\to [0,1]$ we put
\begin{gather}\label{gath:pseudoF}
d_{\mathcal F}(x,y)=\sum_{n=1}^\infty \frac{1}{2^n}|f_n(x)-f_n(y)|
\end{gather}
for all  $x,y\in X$. Then $d_{\mathcal F}$ is a pseudometric on $X$ such that every continuous function $f:(X,d_{\mathcal F})\to Y$ is continuous on $X$ in the initial topology for any topological space $Y$.

\begin{lem}\label{cor:char_closed_stableCF} Let $P=\prod_{n=1}^\infty X_n$ be a product of completely regular spaces, $X$ be a subspace of $P$ and $G$ be a functionally open set in $X$. If one of the following conditions holds:
\begin{enumerate}
    \item\label{perfnorm} $P$ is perfectly normal, or

    \item\label{lind} $X$ is Lindel\"{o}f,
  \end{enumerate}
  then
  \begin{enumerate}
    \item[(a)] there exists an increasing sequence of functions  $f_n\in{\rm CF}(P,[0,1])$ such that the characteristic function \mbox{$\chi_G:X\to [0,1]$} is a pointwise limit of the sequence $(f_n|_X)_{n=1}^\infty$;

    \item[(b)] there exists a countable family $\mathcal F$ of functions from ${\rm CF}(P,[0,1])$ such that $G$ is open in $(X,d_{\mathcal F})$.
  \end{enumerate}
\end{lem}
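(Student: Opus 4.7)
The plan is to establish (a); part (b) then follows by taking $\mathcal{F} = (f_n)_{n \in \mathbb{N}}$, because every $f_n$ is $d_{\mathcal{F}}$-Lipschitz (with constant $2^n$), so each $\{x \in X : f_n(x) > 1/2\}$ is open in $(X, d_{\mathcal{F}})$, and by the monotone convergence $f_n|_X \nearrow \chi_G$ one has $G = \bigcup_n \{x \in X : f_n(x) > 1/2\}$.

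To prove (a) I will construct a countable family $\mathcal{H} \subseteq \mathrm{CF}(P,[0,1])$ with two properties: (i) $h|_X \leq \chi_G$ for every $h \in \mathcal{H}$ (equivalently, $h$ vanishes on $X \setminus G$), and (ii) $\sup_{h \in \mathcal{H}} h(x) = 1$ for every $x \in G$. Given such $\mathcal{H}$, enumerating it as $(h_k)_{k \in \mathbb{N}}$ and setting $f_n := \max_{k \leq n} h_k$ yields an increasing sequence in $\mathrm{CF}(P, [0, 1])$ (since the maximum of finitely many finitely-determined functions is again finitely-determined) with $f_n|_X \nearrow \chi_G$.

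The members of $\mathcal{H}$ are finitely-determined bumps. Write $G = W \cap X$ with $W$ open in $P$, which is possible since $G$ is open in $X$. For each $x \in G$, pick a basic open neighbourhood $V = \prod_i V_i$ of $x$ in $P$ with $V \subseteq W$ and $V_i = X_i$ for all $i$ outside some finite set $J$. By complete regularity of each $X_i$, choose continuous $\rho_i^x: X_i \to [0,1]$ with $\rho_i^x(x_i) = 1$ and $\rho_i^x \equiv 0$ on $X_i \setminus V_i$; then $h_x(y) := \prod_{i \in J} \rho_i^x(y_i)$ lies in $\mathrm{CF}(P, [0, 1])$, satisfies $h_x(x) = 1$, and vanishes outside $V$, so $h_x|_X \leq \chi_G$ (since $V \cap X \subseteq W \cap X = G$).

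The countable extraction from $\{h_x\}_{x \in G}$ is where hypotheses (1) and (2) split the argument, and is the step I expect to be the main obstacle. Under (2), for each $m \in \mathbb{N}$ the family $\{h_x^{-1}((1 - 1/m, 1]) \cap X : x \in G\}$ is an open cover of $G$; since $G$ is functionally open (hence $F_\sigma$) in the Lindel\"of space $X$, $G$ is itself Lindel\"of, so a countable subcover exists, and the union of these countable subcovers over $m$ gives the required $\mathcal{H}$. Under (1), $X$ need not be Lindel\"of, and the extraction requires a different idea: I would use perfect normality of $P$ to realise $W$ as a cozero set $W = \tilde\psi^{-1}((0, 1])$ of $P$ for some continuous $\tilde\psi: P \to [0, 1]$, decompose $W = \bigcup_k \tilde\psi^{-1}([1/k, 1])$ into a countable increasing union of zero sets of $P$, and for each such zero set approximate a Urysohn separator of it from $P \setminus W$ from below by finitely-determined continuous functions, leveraging the finite-coordinate approximations $\tilde\psi \circ p_k \in \mathrm{CF}(P, [0, 1])$. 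The technical core, and the subtlest point, is arranging that these approximants vanish outside $W$ while still exhausting each zero set in the supremum, so that properties (i) and (ii) both hold for the resulting countable family.
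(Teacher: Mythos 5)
Your reduction of (a) to producing a countable family $\mathcal H\subseteq{\rm CF}(P,[0,1])$ of ``bumps'' vanishing on $X\setminus G$ with $\sup_{h\in\mathcal H}h=1$ on $G$, and the derivation of (b) from (a) via $\mathcal F=(f_n)$, are both sound, and your treatment of case (\ref{lind}) is complete: $G$ is functionally open, hence $F_\sigma$ in the Lindel\"{o}f space $X$, hence Lindel\"{o}f, and the countable subcovers of $\{h_x^{-1}((1-1/m,1])\cap X\}$ do the job. This is a legitimate variant of the paper's argument for that case (the paper instead extracts a countable subfamily of basic functionally open sets covering $G$ and builds an interleaved exhaustion $A_n\subseteq B_n\subseteq A_{n+1}$ by finitely determined functionally closed/open sets, then takes Urysohn functions).

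In case (\ref{perfnorm}), however, there is a genuine gap, and it sits exactly where you flag it. The functions $\tilde\psi\circ p_k$ do converge pointwise to $\tilde\psi$, but they need not vanish off $W$: a point $x\notin W$ can have $p_k(x)\in W$, so property (i) fails, and no truncation of $\tilde\psi\circ p_k$ repairs this without destroying (ii). What your scheme really requires is a cover of $W$ (or at least of $G$) by countably many \emph{finitely determined} cozero subsets of $W$, and the single function $\tilde\psi$ does not supply it. The missing idea, which is the paper's key move, is to group the basic open subsets of $W$ by the coordinates they restrict: for each $k$ let $U_k$ be the union of all basic (functionally) open subsets of $W$ depending only on the first $k$ coordinates. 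Then $W=\bigcup_k U_k$, each $U_k$ depends on the first $k$ coordinates, and perfect normality of $P$ makes each $U_k$ functionally open; if $U_k=\psi_k^{-1}((0,1])$ with $\psi_k$ continuous, then $\psi_k\circ p_k$ is a finitely determined continuous function whose cozero set is exactly $U_k$ (here finite determinacy of the \emph{set} $U_k$ is what allows composition with $p_k$, unlike for $\tilde\psi$). Taking $h_{k,m}=\min(m\,\psi_k\circ p_k,1)$ gives precisely the countable family your framework needs: each $h_{k,m}$ vanishes off $U_k\subseteq W$ and $\sup_m h_{k,m}=1$ on $U_k$. With this insertion your proof of case (\ref{perfnorm}) closes; as written, it does not.
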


\begin{proof} {\bf (a)}. Let $\tilde G$ be an open set in $P$ such that $G=X\cap\tilde G$. Take a family $(W_{s}:s\in S)$ of basic functionally open sets in $P$ such that
$$
\tilde G=\bigcup_{s\in S}W_s.
$$

Notice that  $G$ is an $F_\sigma$-set in $X$. Consequently,  $G$ is Lindel\"{o}f in case (\ref{lind}) and we choose at most countable set $S_0\subseteq S$ such that $G\subseteq \bigcup_{s\in S_0}W_s$. Let us observe that $G=(\bigcup_{s\in S_0}W_s)\cap X$.

Denote $T=S$ in case (\ref{perfnorm}) and $T=S_0$ in case (\ref{lind}).   For every  $k\in\mathbb N$ we put
   \begin{gather*}
   T_{k}=\{s\in T: W_{s} \mbox{\,\,depends on the first\,\,} k \mbox{\,\,coordinates}\}, \\
   U_{k}=\bigcup_{s\in T_{k}} W_{s}.
   \end{gather*}
Then $U_k$ depends on the first $k$ coordinates and $U_k\subseteq U_{k+1}$ for every $k\in\mathbb N$. Since for every $k\in\mathbb N$ the set $U_k$ is functionally open in $P$, there exist  sequences $(F_{k,n})_{n=1}^\infty$ and $(G_{k,n})_{n=1}^\infty$  of functionally closed and functionally open subsets of $P$, respectively, such that each of these sets depends on the first $k$ coordinates,
\begin{gather*}
  U_k=\bigcup_{n=1}^\infty F_{k,n}\quad\mbox{and}\quad F_{k,n}\subseteq G_{k,n}\subseteq F_{k,n+1}.
\end{gather*}
For all $n\in\mathbb N$ we set
$$
A_n=\bigcup_{k=1}^n F_{k,n}\quad\mbox{and}\quad B_n=\bigcup_{k=1}^n G_{k,n}.
$$
Then  the set $A_n$ is functionally closed, the set $B_n$ is functionally open, each of them depends on finitely many coordinates and $A_n\subseteq B_n\subseteq A_{n+1}$ for every $n$. Moreover, $G=(\bigcup_{n=1}^\infty A_n)\cap X$. We choose a function $f_n\in {\rm CF}(P,[0,1])$  such that
$$
A_n=f_n^{-1}(1)\quad\mbox{and}\quad P\setminus B_n=f_n^{-1}(0)
$$
for every  $n$. Then $f_{n}(x)\le f_{n+1}(x)$ for all $n\in\mathbb N$ and $x\in P$. Moreover, $\lim_{n\to \infty} f_n(x)=\chi_{G}(x)$ for every  $x\in X$.

{\bf{(b)}.} We put $\mathcal F=(f_n:n\in\mathbb N)$, where $(f_n)_{n=1}^\infty$ is the sequence constructed in (a). Since each function $f_n$ is continuous on $(P,d_{\mathcal F})$, the function $\chi_{G}$ is lower semicontinuous on $(X,d_{\mathcal F})$. Therefore, $G$ is open in $(X,d_{\mathcal F})$.
\end{proof}

\begin{prop}\label{thm:char_amb_stableCF}
Let $P=\prod_{n=1}^\infty X_n$ be a product of completely regular spaces, $X$ be a subspace of $P$ and $A$ be a functionally  $F_\sigma$- and functionally $G_\delta$-set in $X$ simultaneously. If one of the following conditions holds:
\begin{enumerate}
    \item\label{perfnorm} $P$ is perfectly normal, or

    \item\label{lind} $X$ is Lindel\"{o}f,
  \end{enumerate}
  then $\chi_A\in \overline{{\rm CF}(X,[0,1])}^{\,\rm p}$.
\end{prop}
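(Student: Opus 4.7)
My plan is to construct the approximating sequence $(\varphi_n)$ directly, by running Lemma~\ref{cor:char_closed_stableCF}(a) in parallel on two families of functionally open sets tied to the $F_\sigma$ and $G_\delta$ representations of $A$, and then combining the outputs with a product-and-max recipe. The point is that neither representation alone suffices: taking a $\sup$ or $\inf$ of one family only exhibits $\chi_A$ as a function of Baire class two. The product construction below glues the two sides into a single Baire one approximation that remains in ${\rm CF}(X,[0,1])$.

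First I will write $A=\bigcup_{n=1}^\infty F_n=\bigcap_{n=1}^\infty U_n$ with $(F_n)$ functionally closed and increasing, $(U_n)$ functionally open and decreasing, and $F_n\subseteq U_n$; this is possible because a finite union of zero sets is a zero set and a finite intersection of cozero sets is a cozero set. For each $n$, applying Lemma~\ref{cor:char_closed_stableCF}(a) to $U_n$ produces an increasing sequence $(f_{n,k})_{k}\subseteq{\rm CF}(P,[0,1])$ with $f_{n,k}|_X\to\chi_{U_n}$; inspection of the construction shows $f_{n,k}=0$ on $P\setminus\widetilde U_n$, and hence on $X\setminus U_n$. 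Applying the same lemma to $X\setminus F_n$ yields an increasing sequence $(h_{n,k})_{k}\subseteq{\rm CF}(P,[0,1])$ with $h_{n,k}|_X\to\chi_{X\setminus F_n}$ and $h_{n,k}=0$ on $F_n$. I then define
\[
\varphi_n=\max_{1\leq m\leq n}\bigl(1-h_{m,n}\bigr)\cdot f_{m,n}.
\]
Since ${\rm CF}(P,[0,1])$ is closed under products, the map $t\mapsto 1-t$, and finite maxima, each $\varphi_n$ lies in ${\rm CF}(P,[0,1])$, and therefore $\varphi_n|_X\in{\rm CF}(X,[0,1])$.

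The central step is verifying $\varphi_n|_X\to\chi_A$ pointwise. For $x\in A$ pick $m_0$ with $x\in F_{m_0}$. For $n\geq m_0$ the term indexed by $m=m_0$ equals $(1-0)\cdot f_{m_0,n}(x)=f_{m_0,n}(x)$, and $f_{m_0,n}(x)\to 1$ because $x\in F_{m_0}\subseteq U_{m_0}$; since $\varphi_n\leq 1$ everywhere, this forces $\varphi_n(x)\to 1$. For $x\in X\setminus A$ let $m_1$ be the smallest index with $x\notin U_{m_1}$. All terms with $m\geq m_1$ vanish because $f_{m,n}(x)=0$, and for each of the finitely many $m<m_1$ the inclusion $F_m\subseteq A$ gives $x\notin F_m$, so $h_{m,n}(x)\to 1$ and $(1-h_{m,n}(x))\cdot f_{m,n}(x)\to 0$; a finite maximum of null sequences is null, so $\varphi_n(x)\to 0$.

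The main obstacle is identifying this combination. Pure manipulations produce $\chi_A=\inf_n\sup_k f_{n,k}$ from the $G_\delta$ side or $\chi_A=\sup_n\inf_k (1-h_{n,k})$ from the $F_\sigma$ side, each only Baire class two, and no naive diagonalization in a single variable converges. The twist is that the product $(1-h_{m,n})f_{m,n}$ links the two sides pointwise — one factor witnesses the closed approximation and the other the open approximation — while the truncation $m\leq n$ keeps $\varphi_n$ finitely determined. Off $A$ the $G_\delta$ structure automatically truncates the effective range of $m$ to a fixed finite set depending on $x$, which is what rescues convergence from a maximum whose length tends to infinity.
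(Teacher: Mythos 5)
Your argument is correct, but it takes a genuinely different route from the paper. The paper proves the proposition by invoking part (b) of Lemma~\ref{cor:char_closed_stableCF}: it builds the pseudometric $d_{\mathcal F}$ from a countable family of ${\rm CF}(P,[0,1])$-functions so that both $A$ and $X\setminus A$ become $F_\sigma$ in $(X,d_{\mathcal F})$, then applies Bykov's Lemma~3 to separate them by increasing closed sets at positive $d_{\mathcal F}$-distance $d_k$, and finally makes the resulting distance-quotient functions finitely determined by composing with the retractions $p_k$, using $d_{\mathcal F}(p_k(x),x)/d_k\to 0$. You instead use only part (a), applied countably many times: once to each $U_n$ from a decreasing cozero representation of the $G_\delta$ structure and once to each complement $X\setminus F_n$ from an increasing zero-set representation of the $F_\sigma$ structure, and you glue the outputs by the explicit diagonal formula $\varphi_n=\max_{1\le m\le n}(1-h_{m,n})f_{m,n}$. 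Your verification is sound: for $x\in A$ the term $m=m_0$ with $x\in F_{m_0}$ gives the lower bound $f_{m_0,n}(x)\to 1$, and for $x\notin A$ all terms with $m\ge m_1$ vanish identically while the finitely many terms with $m<m_1$ are killed by $1-h_{m,n}(x)\to 0$, so the growing length of the maximum is harmless. One small improvement: you do not actually need to ``inspect the construction'' of Lemma~\ref{cor:char_closed_stableCF} to get $f_{m,k}=0$ on $X\setminus U_m$ and $h_{m,k}=0$ on $F_m$ --- since the lemma already asserts the sequences are increasing and converge on $X$ to the respective characteristic functions, every term is forced to vanish at points of $X$ where the limit is $0$; stating it this way lets you use the lemma purely as a black box. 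What each approach buys: the paper's pseudometric-plus-projection machinery is reused later (e.g.\ in Theorem~\ref{thm:semicont}) and leans on Bykov's separation lemma, while your construction is more self-contained and elementary, avoiding $d_{\mathcal F}$, the retractions $p_k$, and the external reference entirely, at the cost of a slightly more intricate combinatorial formula.
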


\begin{proof}  Lemma~\ref{cor:char_closed_stableCF} implies the existence of a countable family  $\mathcal F$ of functions from ${\rm CF}(P,[0,1])$ such that $A$ and $X\setminus A$ are $F_\sigma$ in $(X,d_{\mathcal F})$. Applying \cite[Lemma 3]{Bykov} we obtain increasing sequences  $(F_k^i)_{k=1}^\infty$ of closed sets in $(X,d_{\mathcal F})$ for $i=1,2$, satisfying the conditions
\begin{gather*}
A=\bigcup_{k=1}^\infty F_k^1,\quad X\setminus A=\bigcup_{k=1}^\infty F_k^2,\\
d_k=d_{\mathcal F}(F_k^1,F_k^2)>0\quad \forall k\in\mathbb N.
\end{gather*}
For every $k\in\mathbb N$ we define a function $h_k:P\to [0,1]$ by the formula
$$
h_k(x)=\min\{\tfrac{1}{d_k}d_{\mathcal F}(x,F_k^2),1\}.
$$
Now for every $k\in\mathbb N$ we put
$$
f_k=h_k\circ p_{k}
$$
and show that $(f_k)_{k=1}^\infty$ converges to $\chi_A$ pointwisely on  $X$. Fix  $x\in X$. Since
$$
\lim_{k\to\infty}|f_k(x)-\chi_A(x)|=\lim_{k\to\infty}|h_k(p_k(x))-h_k(x)|\le \lim_{k\to\infty} \frac{d_{\mathcal F}(p_{k}(x),x)}{d_k}=0,
$$
we have
$$
\lim_{k\to\infty} f_k(x)=\chi_A(x).
$$
The continuity of $f_k$ on $(X,d_{\mathcal F})$ implies the continuity of $f_k$ on  $X$ with the product topology induced from $P$. Hence, $\chi_A\in\overline{{\rm CF}(X)}^{\,\rm p}$.
\end{proof}

\begin{remark}
  It follows from~\cite[Proposition 2.6]{Karlova:2016:EJMA} that for every map $f\in{\rm H}_1(X,Y)$ the preimage $f^{-1}(V)$ of any open set $V\subseteq Y$ is functionally $F_\sigma$ in $X$, if $X$ is a normal space and $Y$ is a metrizable space.
\end{remark}

\begin{cor}\label{cor:char_amb_stableCF}
Let $P$ and $X$ be as in Proposition~\ref{thm:char_amb_stableCF}. Then ${\rm H}_1^0(X)\subseteq \overline{{\rm CF}(X)}^{\,\rm p}$.
\end{cor}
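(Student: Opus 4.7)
The plan is to decompose a finitely-valued Lebesgue one function into a linear combination of characteristic functions of ambiguous sets and apply Proposition~\ref{thm:char_amb_stableCF} to each piece.

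First I would enumerate the finitely many values $c_1,\dots,c_m$ of $f\in{\rm H}_1^0(X)$ and set $A_i=f^{-1}(\{c_i\})$, so that the $A_i$ partition $X$ and $f=\sum_{i=1}^{m}c_i\chi_{A_i}$. The key step is then to show that each $A_i$ is simultaneously functionally $F_\sigma$ and functionally $G_\delta$ in $X$. For this I would invoke the remark preceding the corollary, which applies provided $X$ is normal. In case~(\ref{perfnorm}) $X$ inherits perfect normality from $P$, and in case~(\ref{lind}) $X$ is Lindel\"of and completely regular, hence normal; so the remark yields that $f^{-1}(V)$ is functionally $F_\sigma$ for every open $V\subseteq\mathbb R$. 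Choosing $V$ to be a small open interval around $c_i$ that avoids the other $c_j$'s gives $A_i=f^{-1}(V)$, witnessing that $A_i$ is functionally $F_\sigma$; choosing $V=\mathbb R\setminus\{c_i\}$ gives $X\setminus A_i=f^{-1}(V)$ functionally $F_\sigma$, so $A_i$ is functionally $G_\delta$.

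Having verified the hypotheses, Proposition~\ref{thm:char_amb_stableCF} delivers sequences $(f_n^{(i)})_{n=1}^\infty$ in ${\rm CF}(X,[0,1])$ with $f_n^{(i)}\to\chi_{A_i}$ pointwise for each $i=1,\dots,m$. Setting $g_n=\sum_{i=1}^{m}c_i f_n^{(i)}$ we get $g_n\to f$ pointwise, and $g_n\in{\rm CF}(X)$ since a finite linear combination of continuous functions each depending on the first $N_i$ coordinates is continuous and depends on the first $\max_i N_i$ coordinates. This shows $f\in\overline{{\rm CF}(X)}^{\,\rm p}$.

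The only non-routine point is the verification of the ambiguity of the level sets $A_i$; everything else is a packaging argument. The heavy lifting has already been done in Proposition~\ref{thm:char_amb_stableCF}, and the remark on normal domains handles the transition from Lebesgue one to functionally ambiguous preimages in both settings.
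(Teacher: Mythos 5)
Your proposal is correct and follows essentially the same route as the paper: decompose $f=\sum_i c_i\chi_{A_i}$ over the level sets, check that each $A_i$ is simultaneously functionally $F_\sigma$ and functionally $G_\delta$, and apply Proposition~\ref{thm:char_amb_stableCF} termwise. In fact you spell out the details the paper leaves implicit (normality of $X$ in both cases via heredity of perfect normality, respectively regular Lindel\"of implies normal, and the use of the preceding remark to pass from Lebesgue one to functionally $F_\sigma$ preimages), so nothing is missing.
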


\begin{proof}
  We consider a function $f\in {\rm H}_1^0(X)$ with a finite image $f(X)=\{y_1,\dots,y_n\}$. Notice that the family $(A_i:i\in\{1,\dots,n\})$ of the sets $A_i=f^{-1}(y_i)$ forms a partition of $X$ by functionally $F_\sigma$-sets. Moreover, $f(x)=\sum_{i=1}^n y_i\cdot \chi_{A_i}(x)$
for every  $x\in X$. Since $y_i\cdot\chi_{A_i}\in \overline{{\rm CF}(X)}^{\,\rm p}$ for every $i$ by Proposition~\ref{thm:char_amb_stableCF}, $f\in  \overline{{\rm CF}(X)}^{\,\rm p}$.
\end{proof}

\section{R-spaces and uniform limits}

For a family $(f_s:s\in S)$ of maps between topological spaces $X$ and $Y$ we define a map $\mathop{\Delta}_{s\in S}f_s:X\to Y^S$ as the following:
$$
\mathop{\Delta}_{s\in S}f_s(x)=(f_s(x))_{s\in S}
$$
for every $x\in X$. We say that $\mathcal F$ is {\it $\Delta$-closed}, if
$$
f\circ (\mathop{\Delta}_{s\in S}f_s)\in\mathcal F
$$
for any finite set $\{f_s:s\in S\}\subseteq\mathcal F$ and a continuous map $f:Y^S\to Y$.

A metric space $(Y,d)$ is said to be an {\it R-space}~\cite{Karlova:2005}, if for every $\varepsilon>0$ there exists a continuous map   $r_{\varepsilon}:Y\times Y\to Y$ with the following properties:
\begin{gather}
  d(y,z)\le\varepsilon\,\, \Longrightarrow\,\, r_\varepsilon(y,z)=y,\\
d(r_\varepsilon(y,z),z)\le\varepsilon
\end{gather}
for all $y,z\in Y$. Let us observe that every convex subset $Y$ of a normed space $(Z,\|\cdot\|)$ equipped with the metric induced from $(Z,\|\cdot\|)$ is an R-space, where the map $r_{\varepsilon}$ is defined as $r_{\varepsilon}(y,z)=z+(\varepsilon/\|y-z\|)\cdot (y-z)$, if $\|y-z\|>\varepsilon$, and $r_{\varepsilon}(y,z)=y$, otherwise.

\begin{thm}\label{thm:unif_lim_Delta}
Let  $X$ be a topological space, $(Y,d)$ be an R-space and $\mathcal F$ be a $\Delta$-closed family of maps between  $X$ and $Y$. Then the family  $\overline{\mathcal F}^{\,{\rm p}}$ is closed under uniform limits.
\end{thm}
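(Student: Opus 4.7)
The plan is to construct a single sequence in $\mathcal F$ converging pointwise to $g$ by combining the $\Delta$-closure of $\mathcal F$ with the retractions $r_\varepsilon$ of the R-space. Let $(g_n) \subset \overline{\mathcal F}^{\,\rm p}$ converge uniformly to $g$. After passing to a subsequence, I may assume $d(g_n(x),g(x)) < 2^{-n-2}$ for every $x$, so that $d(g_n(x),g_{n-1}(x)) < 2^{-n+1}$ for every $x$ and every $n \ge 2$. For each $n$, I fix a sequence $(\phi_{n,k})_k \subset \mathcal F$ with $\phi_{n,k} \to g_n$ pointwise on $X$.

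The core construction will be a double-indexed family $\psi_n^{(k)} \in \mathcal F$: set $\psi_1^{(k)} := \phi_{1,k}$ and, for $n \ge 2$, $\psi_n^{(k)} := r_{2^{-n+1}} \circ \Delta(\phi_{n,k},\psi_{n-1}^{(k)})$. Since $\mathcal F$ is $\Delta$-closed and $r_{2^{-n+1}}$ is continuous, each $\psi_n^{(k)}$ lies in $\mathcal F$. Two estimates then drive the argument. First, the second defining property of $r_\varepsilon$ gives $d(\psi_n^{(k)}(x),\psi_{n-1}^{(k)}(x)) \le 2^{-n+1}$, which telescopes to the uniform Cauchy bound $d(\psi_m^{(k)}(x),\psi_n^{(k)}(x)) \le 2^{-n+1}$ whenever $n \le m$. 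Second, by induction on $n$ and continuity of $r_{2^{-n+1}}$, $\psi_n^{(k)}(x) \to r_{2^{-n+1}}(g_n(x),g_{n-1}(x))$ as $k \to \infty$, and the first defining property of $r_\varepsilon$ together with the bound $d(g_n(x),g_{n-1}(x)) < 2^{-n+1}$ identifies this limit as $g_n(x)$.

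To finish, I take the diagonal $h_m := \psi_m^{(m)} \in \mathcal F$. For any fixed $x$ and any $n \le m$, the two estimates combine into
\[
d(h_m(x),g(x)) \le 2^{-n+1} + d(\psi_n^{(m)}(x),g_n(x)) + 2^{-n-2}.
\]
Letting $m \to \infty$ with $n$ fixed makes the middle term vanish, so $\limsup_m d(h_m(x),g(x)) \le 2^{-n+1} + 2^{-n-2}$ for every $n$; letting $n \to \infty$ then yields $h_m(x) \to g(x)$. Hence $g \in \overline{\mathcal F}^{\,\rm p}$.

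The main obstacle is calibrating the retraction parameters: the induction step in $n$ succeeds only because $d(g_n,g_{n-1}) < 2^{-n+1}$ strictly, which is exactly what the initial thinning $d(g_n,g) < 2^{-n-2}$ buys. Once this is arranged, the first property of the R-space forces $r_{2^{-n+1}}$ to collapse onto its first argument in the limit $k \to \infty$, and the uniform Cauchy structure in $n$ allows the diagonal $h_m = \psi_m^{(m)}$ to carry the combined limit.
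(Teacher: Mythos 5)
Your proof is correct and follows essentially the same route as the paper: the same recursive composition $\psi_n^{(k)}=r_{2^{-(n-1)}}\circ\Delta(\phi_{n,k},\psi_{n-1}^{(k)})$ made legitimate by $\Delta$-closedness, the same two estimates, and the same diagonal $\psi_m^{(m)}$. The only (harmless) difference is how the inner limit is identified: you pass to the limit in $k$ using joint continuity of $r_\varepsilon$ and then apply its first property at $(g_n(x),g_{n-1}(x))$, whereas the paper shows that for large $k$ the retraction already acts as the identity, so $\psi_n^{(k)}(x)=\phi_{n,k}(x)$ eventually; both yield $\psi_n^{(k)}(x)\to g_n(x)$.
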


\begin{proof} Let $(r_n)_{n=1}^\infty$ be a sequence of continuous functions $r_n:Y\times Y\to Y$ such that
\begin{gather}
  d(y,z)\le \frac{1}{2^n}\,\, \Longrightarrow\,\, r_n(y,z)=y,\\
d(r_n(y,z),z)\le\frac{1}{2^n}
\end{gather}
for all $y,z\in Y$ and $n\in\mathbb N$.

We consider a sequence $(f_n)_{n=1}^\infty$ of maps $f_n\in \overline{\mathcal F}^{\,{\rm p}}$ which is convergent to a map $f:X\to Y$ uniformly on $X$. Without loss of generality we may assume that
$$
d(f_n(x),f_{n+1}(x))\le \frac{1}{2^{n+1}}
$$
for all  $n\in\mathbb N$ and $x\in X$. For every $n$ we take a sequence of maps $g_{n,m}:X\to Y$ such that
\begin{gather*}
\lim_{m\to \infty} g_{n,m}(x)=f_n(x) \quad\forall x\in X,\\
g_{n,m}\in\mathcal F\quad \forall m,n\in\mathbb N.
\end{gather*}
For all $x\in X$ and $m\in {\mathbb N}$ let
\begin{gather*}
\varphi_{1,m}(x)=g_{1,m}(x)\quad\mbox{and}\quad \varphi_{n,m}(x)=r_{n-1}(g_{n,m}(x),\varphi_{n-1,m}(x)) \quad\mbox{for  $n>1$}.
\end{gather*}
Then $\varphi_{n,m}\in \mathcal F$ for all $n,m$, since $\mathcal F$ is $\Delta$-closed. Moreover,  the inequality
$$
d(\varphi_{n+1,m}(x),\varphi_{n,m}(x))\le \frac{1}{2^n}
$$
holds for all $n\in\mathbb N$ and $x\in X$.

We claim that the sequence $(\varphi_{n,m})_{m=1}^{\infty}$ converges to $f_n$ pointwisely on $X$ for every $n$. Indeed, fix  $x\in X$ and show that for every  $n\in\mathbb N$ there exists a number $m_n$ such that $\varphi_{n,m}(x)=g_{n,m}(x)$ for all $m\ge m_n$.  For $n=1$ the statement is evident. Suppose that $n>1$  and  $\varphi_{n-1,m}(x)=g_{n-1,m}(x)$ for $m\ge m_{n-1}$. Since $(g_{n,m}(x))_{m=1}^\infty$ converges to $f_n(x)$ and $(g_{n-1,m}(x))_{m=1}^\infty$ converges to $f_{n-1}(x)$, there exists a number $m_0$ such that
\begin{gather*}
d(g_{n,m}(x),f_n(x))< \frac{1}{2^{n+1}}\quad\mbox{and}\quad d(g_{n-1,m}(x),f_{n-1}(x))<\frac{1}{2^{n+1}}
\end{gather*}
for all $m\ge m_0$. Denote $m_n=\max\{m_0,m_{n-1}\}$ and notice that
\begin{gather*}
d(g_{n,m}(x),\varphi_{n-1,m}(x))=d(g_{n,m}(x),g_{n-1,m}(x))\le\\
\le d(g_{n,m}(x),f_n(x))+d(f_n(x),f_{n-1}(x))+d(f_{n-1}(x),g_{n-1,m}(x))\le\\
\le\frac{1}{2^{n+1}}+\frac{1}{2^{n}}+\frac{1}{2^{n+1}}=\frac{1}{2^{n-1}}
\end{gather*}
for all $m\ge m_n$. Consequently,
$$
\varphi_{n,m}(x)=r_{n-1}(g_{n,m}(x),\varphi_{n-1,m}(x))=g_{n,m}(x)
$$
for all $m\ge m_n$.

Finally, we prove that $\lim_{m\to\infty}\varphi_{m,m}(x)=f(x)$. Fix $\varepsilon>0$ and take  $n_0\in \mathbb N$ such that
\begin{gather*}
\frac{1}{2^{n_0-1}}<\frac{\varepsilon}{3}\quad\mbox{and}\quad d(f_{n_0}(x),f(x))\le\frac{\varepsilon}{3}.
\end{gather*}
Since $\lim_{m\to\infty}{\varphi_{n_0,m}}(x)=f_{n_0}(x)$, there exists $k_0>n_0$ such that
$$
d(\varphi_{n_0,m}(x),f_{n_0}(x))\le\frac{\varepsilon}{3}
$$
 for all $m\ge k_0$. Then
\begin{gather*}
d(\varphi_{m,m}(x),f(x))\le \sum_{i=n_0}^{m-1}
d(\varphi_{i+1,m}(x),\varphi_{i,m}(x))+d(\varphi_{n_0,m}(x),f_{n_0}(x))+d(f_{n_0}(x),f(x))\le\\
\le\sum_{i=n_0}^{m-1}\frac{1}{2^i}+\frac{\varepsilon}{3}+\frac{\varepsilon}{3}<\frac{1}{2^{n_0-1}}+\frac{\varepsilon}{3}+\frac{\varepsilon}{3}<\varepsilon
\end{gather*}
 for all $m\ge k_0$.  Therefore, $f\in \overline{\mathcal F}^{\,\rm p}$.
\end{proof}

We will use the following statement from \cite{Karlova:2004} (see also  \cite[\S 31.VIII, Theorem 3]{Kuratowski:Top:1} for metrizable domains).

\begin{lem}\label{lem:unif_lim}
   Let $X$ be a normal space and $Y$ be a metric separable space. Then every map $f\in {\rm H}_1(X,Y)$ is a limit of uniformly convergent sequence of functions $f_n\in {\rm H}_1(X,Y)$ with countable discrete image $f_n(X)$ for every $n\in\mathbb N$. If, moreover, $Y$ is completely bounded, we may assume that each function $f_n$ takes finitely many values.
\end{lem}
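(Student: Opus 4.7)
The plan is to discretize the target $Y$ at resolution $2^{-n}$ via maximal separated sets. For each $n\in\mathbb N$, invoke Zorn's lemma to choose a maximal $2^{-n}$-separated subset $D_n=\{z_{n,k}\}_k\subseteq Y$. Separability of $Y$ makes $D_n$ at most countable (finite if $Y$ is totally bounded), the separation condition automatically makes $D_n$ a discrete subspace of $Y$, and maximality forces the open balls $B_{n,k}:=B(z_{n,k},2^{-n})$ to cover $Y$. When $Y$ is totally bounded, $D_n$ is finite.

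After fixing an enumeration of $D_n$, disjointify by $V_{n,k}:=B_{n,k}\setminus\bigcup_{j<k}B_{n,j}$. This partitions $Y$ into sets of diameter at most $2^{1-n}$, each simultaneously $F_\sigma$ and $G_\delta$ in $Y$ (the intersection of an open ball with a closed set in a metric space), and $z_{n,k}\in V_{n,k}$ by the separation condition. Define $f_n\colon X\to Y$ by $f_n(x)=z_{n,k}$ whenever $f(x)\in V_{n,k}$. The bound $d(f_n(x),f(x))<2^{-n}$ gives uniform convergence of $f_n$ to $f$, while discreteness and countability (resp.\ finiteness) of $f_n(X)\subseteq D_n$ are immediate from the construction.

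The main obstacle is to verify that $f_n\in{\rm H}_1(X,Y)$. Because $f_n(X)$ is discrete, for every open $U\subseteq Y$ one has
$$
f_n^{-1}(U)=\bigcup_{k:\,z_{n,k}\in U}f^{-1}(V_{n,k}),
$$
so the problem reduces to showing that each level set $f^{-1}(V_{n,k})$ is $F_\sigma$ in $X$. Writing $E_j:=f^{-1}(B_{n,j})$, which is $F_\sigma$ in $X$ by the hypothesis $f\in{\rm H}_1$, yields $f^{-1}(V_{n,k})=E_k\setminus\bigcup_{j<k}E_j$. One must then confirm that this specific shape of difference of $F_\sigma$-sets---combined with the dual $G_\delta$ representation of $V_{n,k}$ in $Y$ and the normal structure of $X$---produces an $F_\sigma$-set in $X$; this Borel-class bookkeeping is the descriptive-set-theoretic content of the cited references, and it is the only step where normality of $X$ enters nontrivially.
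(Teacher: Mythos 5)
Your construction of the discrete nets and the disjointified cells $V_{n,k}$ is fine, and your reduction is honest: since $D_n$ is $2^{-n}$-separated, $f_n^{-1}(B(z_{n,k},2^{-n}))=f^{-1}(V_{n,k})$, so membership of $f_n$ in ${\rm H}_1(X,Y)$ really does require each $f^{-1}(V_{n,k})$ to be $F_\sigma$. But the step you defer to ``Borel-class bookkeeping'' is not a verification gap --- it is false, already for $X=Y=\mathbb R$, so the construction itself is wrong. The preimage of a ball minus finitely many balls under an ${\rm H}_1$-map is only an intersection of an $F_\sigma$ set with a $G_\delta$ set. Concretely, write the set $P$ of irrationals of $[0,1]$ as $P=\bigcap_m U_m$ with $U_m$ open and decreasing, and put $f=2^{-n}\sum_{m\ge1}2^{-m}\chi_{U_m}$. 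Then $f$ is Baire one (a uniform limit of finite sums of characteristic functions of open sets), takes values in $[0,2^{-n}]$, and $f^{-1}([2^{-n},2^{-n+1}))=P$, which is $G_\delta$ but not $F_\sigma$. Taking the maximal $2^{-n}$-separated set $D_n=2^{-n}\mathbb Z$ enumerated with $z_{n,1}=0$, $z_{n,2}=2^{-n}$, you get $V_{n,2}=[2^{-n},2^{-n+1})$, hence $f_n^{-1}(B(z_{n,2},2^{-n}))=f^{-1}(V_{n,2})=P$ and $f_n\notin{\rm H}_1(\mathbb R,\mathbb R)$. So no descriptive-set-theoretic argument, and no appeal to normality, can rescue a disjointification carried out in the range $Y$.

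The missing idea is to disjointify in the domain, via the reduction property for (functionally) $F_\sigma$ sets. Keep the cover of $Y$ by the balls $B_{n,k}$ and set $E_k=f^{-1}(B_{n,k})$. By the Remark following Proposition~\ref{thm:char_amb_stableCF} (via \cite[Proposition 2.6]{Karlova:2016:EJMA}), normality of $X$ together with metrizability of $Y$ gives $E_k=\bigcup_i Z_{k,i}$ with each $Z_{k,i}$ functionally closed; this, not your last step, is where normality genuinely enters, and it is needed because in a normal, non-perfectly-normal space even a set of the form (closed)$\,\cap\,$(open) need not be $F_\sigma$. Now enumerate all the pieces $Z_{k,i}$, over all $k$ simultaneously, into one sequence $(C_j)_j$, put $D_j=C_j\setminus\bigcup_{l<j}C_l$ (functionally closed intersected with functionally open, hence functionally $F_\sigma$), and let $H_k$ be the union of those $D_j$ whose $C_j$ came from $E_k$ (choosing one $k$ per $j$). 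The $H_k$ are pairwise disjoint functionally $F_\sigma$ sets with $H_k\subseteq E_k$ and $\bigcup_k H_k=X$, so the map $f_n$ equal to $z_{n,k}$ on $H_k$ lies in ${\rm H}_1(X,Y)$, satisfies $d(f_n(x),f(x))<2^{-n}$, and has discrete countable image (finite when $Y$ is totally bounded, since then finitely many balls suffice). Note that the paper itself offers no proof of this lemma --- it quotes it from \cite{Karlova:2004}, with \cite{Kuratowski:Top:1} for metrizable domains --- and those proofs proceed exactly by this domain-side reduction, which is the step your proposal replaces by an unjustified and, as shown above, untrue claim.
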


\section{Approximation of maps with path-connected range}

\begin{lem}\label{lem:pointwise_countable}
Let $P$ and $X$ be as in Proposition~\ref{thm:char_amb_stableCF} and let $f\in{\rm H}_1(X,\mathbb R)$ has a discrete countable image $f(X)$. Then $f\in \overline{{\rm CF}(X)}^{\,\rm p}$.
\end{lem}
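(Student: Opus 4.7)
The plan is to reduce the statement to the finite-valued case covered by Corollary~\ref{cor:char_amb_stableCF} by boundedly re-parameterizing the range of $f$, and then to lift the resulting uniform convergence through Theorem~\ref{thm:unif_lim_Delta}.

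First I would observe that in either hypothesis of Proposition~\ref{thm:char_amb_stableCF} the space $X$ is normal: a subspace of the perfectly normal space $P$ is perfectly normal, whereas a regular Lindel\"of space is normal. Hence Lemma~\ref{lem:unif_lim} applies to $X$. To activate its ``completely bounded'' conclusion I would compose $f$ with a bounded homeomorphism of the line; fix $\phi:\mathbb R\to(-1,1)$, for instance $\phi(t)=t/(1+|t|)$, and put $\tilde f=\phi\circ f$, viewed as a map into $[-1,1]$. Then $\tilde f\in {\rm H}_1(X,[-1,1])$ (composition with a continuous map preserves Baire class one), and since $[-1,1]$ is completely bounded, Lemma~\ref{lem:unif_lim} yields a sequence $\tilde f_k\in {\rm H}_1^0(X,[-1,1])$ with $\tilde f_k\to\tilde f$ uniformly on $X$.

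By Corollary~\ref{cor:char_amb_stableCF} each $\tilde f_k$ lies in $\overline{{\rm CF}(X)}^{\,{\rm p}}$. The family ${\rm CF}(X,\mathbb R)$ is $\Delta$-closed, since a continuous function of finitely many finitely determined continuous functions is again finitely determined and continuous, and $\mathbb R$ is an R-space because it is a convex subset of itself. Theorem~\ref{thm:unif_lim_Delta} therefore applies to the uniformly convergent sequence $(\tilde f_k)$ and gives $\tilde f\in\overline{{\rm CF}(X)}^{\,{\rm p}}$.

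Finally I would transfer the approximation back through $\phi$. Choose $\tilde g_n\in{\rm CF}(X)$ with $\tilde g_n\to\tilde f$ pointwise. The naive candidate $\phi^{-1}\circ\tilde g_n$ is unsafe because $\tilde g_n$ need not map into the domain $(-1,1)$ of $\phi^{-1}$, so I clip: set $g_n=\max(-1+\tfrac{1}{n},\min(1-\tfrac{1}{n},\tilde g_n))$, which is still in ${\rm CF}(X)$. For each $x$, $\tilde f(x)\in(-1,1)$ and $\tilde g_n(x)\to\tilde f(x)$, so the clipping is eventually inactive and $g_n(x)\to\tilde f(x)$; composing with the continuous $\phi^{-1}$ then gives $\phi^{-1}\circ g_n\in{\rm CF}(X)$ and $\phi^{-1}(g_n(x))\to\phi^{-1}(\tilde f(x))=f(x)$, so $f\in\overline{{\rm CF}(X)}^{\,{\rm p}}$. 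The main obstacle is precisely this last transfer: the continuous approximants of a bounded function need not themselves be bounded, and the clipping step is what is needed to fix this.
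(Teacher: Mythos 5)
Your proof is correct and follows essentially the same route as the paper: compose with a homeomorphism of $\mathbb R$ onto a bounded interval, approximate uniformly by finite-valued Lebesgue-one functions via Lemma~\ref{lem:unif_lim}, pass through Corollary~\ref{cor:char_amb_stableCF} and Theorem~\ref{thm:unif_lim_Delta}, and transfer back through the inverse homeomorphism. The only difference is bookkeeping: the paper keeps the codomain $(0,1)$ (itself an R-space) throughout so that the approximants automatically lie in the domain of $\varphi^{-1}$, whereas your clipping step handles the same point for real-valued approximants and in fact makes explicit a detail the paper leaves implicit.
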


\begin{proof}
  We consider a homeomorphism $\varphi:\mathbb R\to (0,1)$ and the function $\varphi\circ f:X\to (0,1)$ of the first  Lebesgue class. By Lemma~\ref{lem:unif_lim} we choose a sequence $(g_n)_{n=1}^\infty$ of functions from ${\rm H}_1^0(X,(0,1))$ which is convergent to  $\varphi\circ f$ uniformly on $X$. Since $g_n\in \overline{{\rm CF}(X,(0,1))}^{\,\rm p}$ according to Corollary~\ref{cor:char_amb_stableCF} for every $n\in\mathbb N$ and the space $Y=(0,1)$ with the metric $d(x,y)=|x-y|$ is an  R-space, $\varphi\circ f\in \overline{{\rm CF}(X,(0,1))}^{\,\rm p}$ by Theorem~\ref{thm:unif_lim_Delta}. Hence, $f\in \overline{{\rm CF}(X)}^{\,\rm p}$.
\end{proof}

\begin{prop}\label{prop:finite_valued_in_arc}
Let $P$ and $X$ be as in Proposition~\ref{thm:char_amb_stableCF} and let $Y$ be a path-connected space. Then every Lebesgue one function $f:X\to Y$ with countable discrete image $f(X)$ is a pointwise limit of a sequence of functions from ${\rm CF}(X,Y)$.
\end{prop}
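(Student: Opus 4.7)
The plan is to reduce the problem to the real-valued case already handled by Lemma~\ref{lem:pointwise_countable}, by factoring $f$ through $\mathbb R$ along a continuous path in $Y$. Since $Y$ is path-connected, I can thread the discrete countable image $f(X)$ by a continuous map $\gamma:\mathbb R\to Y$ that hits the enumerated values of $f$ at the positive integers, and on the other side I will produce a real-valued Lebesgue one function $h:X\to\mathbb R$ satisfying $f=\gamma\circ h$. Approximating $h$ pointwise by functions in ${\rm CF}(X)$ and composing with $\gamma$ will then yield the desired approximation of $f$ by functions in ${\rm CF}(X,Y)$.

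In detail, I enumerate $f(X)=\{y_n:n\in\mathbb N\}$ and set $A_n=f^{-1}(y_n)$. Because $\{y_n\}$ is relatively open in $f(X)$, there is an open $V_n\subseteq Y$ with $V_n\cap f(X)=\{y_n\}$, so $A_n=f^{-1}(V_n)$ is functionally $F_\sigma$ in $X$ by the Lebesgue one hypothesis; the complement $X\setminus A_n=\bigcup_{m\ne n}A_m$ is then functionally $F_\sigma$ as a countable union of such, so every $A_n$ is simultaneously functionally $F_\sigma$ and functionally $G_\delta$. I define $h:X\to\mathbb R$ by $h(x)=n$ whenever $x\in A_n$; for every open $U\subseteq\mathbb R$, $h^{-1}(U)=\bigcup_{n\in U\cap\mathbb N}A_n$ is functionally $F_\sigma$, hence $h\in{\rm H}_1(X,\mathbb R)$, and its image $\mathbb N$ is discrete in $\mathbb R$. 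Lemma~\ref{lem:pointwise_countable} then supplies a sequence $(h_m)_{m=1}^\infty$ in ${\rm CF}(X)$ converging to $h$ pointwise on $X$.

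To build $\gamma$, for each $n\in\mathbb N$ I choose a path $\alpha_n:[0,1]\to Y$ from $y_n$ to $y_{n+1}$ (available by path-connectedness) and set $\gamma(t)=y_1$ for $t\le 1$ and $\gamma(t)=\alpha_n(t-n)$ for $t\in[n,n+1]$ with $n\ge 1$; the pasting lemma gives continuity, and $\gamma(n)=y_n$ for all $n\ge 1$. Consequently $f=\gamma\circ h$, and the compositions $f_m=\gamma\circ h_m$ are continuous and depend on the same finite set of coordinates as $h_m$, hence lie in ${\rm CF}(X,Y)$, with $f_m\to f$ pointwise on $X$.

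The only genuine verification is that each $A_n$ is functionally ambiguous in $X$; this rests on the discreteness of $f(X)$ forcing $\{y_n\}$ to be both relatively open and relatively closed in the image, combined with the definition of the Lebesgue one class. Once this is in place, the rest of the argument is a mechanical transport of the real-valued approximation through a continuous path, and in particular uses no metric, linear, or R-space structure on $Y$.
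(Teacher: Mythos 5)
Your proposal is correct and follows essentially the same route as the paper: encode the partition $A_n=f^{-1}(y_n)$ by an integer-valued Lebesgue one function with discrete image, approximate it via Lemma~\ref{lem:pointwise_countable}, and push the approximants through a continuous path $\gamma$ threading the values $y_n$. Your only deviations are cosmetic but welcome ones --- extending $\gamma$ to all of $\mathbb R$ (constant on $(-\infty,1]$) so that $\gamma\circ h_m$ is defined regardless of the range of the real-valued approximants, and spelling out why each $A_n$ is functionally ambiguous, which the paper asserts without comment.
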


\begin{proof}
 Let $f\in {\rm H}_1(X,Y)$ and $f(X)=\{y_n:n\in\mathbb N\}$, $y_n\ge y_m$ for $n\ge m$. For every $n\in\mathbb N$ we take a continuous map $\gamma_n:[n,n+1]\to Y$ such that
$\gamma_n(n)=y_n$ and $\gamma_n(n+1)=y_{n+1}$. Now we define a continuous map $\gamma:[1,+\infty)\to Y$ such that $\gamma|_{[n,n+1]}=\gamma_n$
for every $n\in\mathbb N$. Notice that every set $A_n=f^{-1}(y_n)$ is functionally $F_\sigma$ in $X$ and the family $(A_n:n\in\mathbb N)$ forms a partition of  $X$. We put
$g(x)=n$ if $x\in A_n$ for some $n\in\mathbb N$. Then $g\in {\rm H}_1(X,[1,+\infty))$ and the space $g(X)$ is discrete and countable. By Lemma~\ref{lem:pointwise_countable} there exists a sequence  of functions $g_m\in {\rm CF}(X)$ which is convergent to $g$ pointwisely on $X$. Let us define functions $f_m\in {\rm CF}(X,Y)$ by the formula  $f_m=\gamma\circ g_m$ for $m\in\mathbb N$. It remains to notice that for every  $x\in X$ there exists $n\in\mathbb N$ such that $x\in A_n$, therefore, $\lim_{m\to\infty} f_m(x)=\gamma(g(x))=\gamma(n)=y_n=f(x)$.
\end{proof}

\begin{thm}\label{thm:L_one_is_CF_limit} Let $P$ and $X$ be as in Proposition~\ref{thm:char_amb_stableCF}  and $(Y,d)$ be a path-connected separable R-space.  Then every Lebesgue one function $f:X\to Y$ is a pointwise limit of a sequence of functions from ${\rm CF}(X,Y)$.
\end{thm}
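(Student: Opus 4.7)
The plan is to combine three ingredients already set up in the paper: the uniform approximation of Lebesgue one maps by Lebesgue one maps with countable discrete image (Lemma~\ref{lem:unif_lim}), the pointwise approximation of such discrete-valued maps by finitely determined continuous maps (Proposition~\ref{prop:finite_valued_in_arc}), and the stability of the pointwise closure under uniform limits for $\Delta$-closed families into R-spaces (Theorem~\ref{thm:unif_lim_Delta}).

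First I would verify the standing hypotheses needed to run these tools. The subspace $X\subseteq P$ is completely regular, and in case~(\ref{perfnorm}) of Proposition~\ref{thm:char_amb_stableCF} it inherits perfect normality from $P$, while in case~(\ref{lind}) a regular Lindel\"of space is normal; so $X$ is normal in either case. Since $(Y,d)$ is a separable metric space, Lemma~\ref{lem:unif_lim} applies and yields a sequence $(f_n)_{n=1}^\infty$ in ${\rm H}_1(X,Y)$ with countable discrete images $f_n(X)$ such that $f_n\to f$ uniformly on $X$.

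Next I would apply Proposition~\ref{prop:finite_valued_in_arc}, using the path-connectedness of $Y$, to conclude that each $f_n$ lies in $\overline{{\rm CF}(X,Y)}^{\,\rm p}$. To invoke Theorem~\ref{thm:unif_lim_Delta} I must check that the family $\mathcal F={\rm CF}(X,Y)$ is $\Delta$-closed: if $f_{s_1},\dots,f_{s_k}\in{\rm CF}(X,Y)$ each depend on the first $N_i$ coordinates and $h\colon Y^k\to Y$ is continuous, then $h\circ(\mathop{\Delta}_{i} f_{s_i})$ is continuous and depends on at most the first $\max_i N_i$ coordinates, hence belongs to ${\rm CF}(X,Y)$. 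Since $(Y,d)$ is an R-space, Theorem~\ref{thm:unif_lim_Delta} applies and the uniform limit $f$ of the sequence $(f_n)$ in $\overline{{\rm CF}(X,Y)}^{\,\rm p}$ also lies in $\overline{{\rm CF}(X,Y)}^{\,\rm p}$, which is exactly the conclusion.

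I do not anticipate a substantial obstacle: the proof is essentially an assembly argument, and all three building blocks have already been assembled in the preceding sections with precisely this theorem in mind. The only points requiring a moment of care are verifying that $X$ is normal so Lemma~\ref{lem:unif_lim} can be applied, and the one-line check that ${\rm CF}(X,Y)$ is $\Delta$-closed so Theorem~\ref{thm:unif_lim_Delta} can be applied.
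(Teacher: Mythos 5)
Your proposal is correct and follows essentially the same route as the paper: uniform approximation by Lebesgue one maps with countable discrete image (Lemma~\ref{lem:unif_lim}), Proposition~\ref{prop:finite_valued_in_arc} for each approximant, and Theorem~\ref{thm:unif_lim_Delta} applied to the $\Delta$-closed family ${\rm CF}(X,Y)$. Your extra checks (normality of $X$ in both cases and the $\Delta$-closedness of ${\rm CF}(X,Y)$) are correct and merely make explicit what the paper leaves implicit.
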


\begin{proof} Let $f\in{\rm H}_1(X,Y)$. According to Lemma~\ref{lem:unif_lim} there exists a sequence of functions $f_n\in{\rm H}_1(X,Y)$ with discrete  countable image which is convergent to $f$ uniformly on $X$. Then $f_n\in \overline{{\rm CF}(X,Y)}^{\,\rm p}$ for every $n$ by Proposition~\ref{prop:finite_valued_in_arc}. Since the family ${\rm CF}(X,Y)$ is $\Delta$-closed,  $f\in \overline{{\rm CF}(X,Y)}^{\,\rm p}$ by Theorem~\ref{thm:unif_lim_Delta}.
\end{proof}

\section{Semicontinuous functions}

\begin{thm}\label{thm:semicont}
Let $P$ and $X$ be as in Proposition~\ref{thm:char_amb_stableCF} and let $f:X\to [0,+\infty)$ be a lower semicontinuous function. Then $f$ is a pointwise limit of an increasing sequence of functions $f_n\in {\rm CF}(X,[0,+\infty))$.
\end{thm}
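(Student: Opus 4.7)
The plan is to combine the standard dyadic lower-approximation of a nonnegative lower semicontinuous function with Lemma~\ref{cor:char_closed_stableCF}(a). First I would set $U_{k,n}=\{x\in X:f(x)>k/2^n\}$ and form the lower semicontinuous step function
$$
u_n(x)=\frac{1}{2^n}\sum_{k=1}^{n2^n}\chi_{U_{k,n}}(x),
$$
and check the routine facts $0\le u_n\le u_{n+1}\le f$ and $u_n(x)\to f(x)$ for every $x\in X$; the monotonicity uses the inclusions $\{f>2k/2^{n+1}\}\subseteq\{f>(2k-1)/2^{n+1}\}$ together with $n\cdot 2^{n+1}\le(n+1)\cdot 2^{n+1}$.

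Next I have to verify that each $U_{k,n}$ is functionally open in $X$, since this is the hypothesis under which Lemma~\ref{cor:char_closed_stableCF}(a) applies. In case~(\ref{perfnorm}), perfect normality is hereditary, so $X$ is perfectly normal and every open set is a cozero set. In case~(\ref{lind}), $X$ is a Lindel\"of Tychonoff space, hence normal, and in any Lindel\"of regular space every open set is $F_\sigma$; consequently the complement of any open set is a closed $G_\delta$, which in a normal space is a zero set. Thus in both cases every open subset of $X$ is functionally open, and Lemma~\ref{cor:char_closed_stableCF}(a) produces, for every pair $(k,n)$, an increasing sequence $(g_m^{k,n})_{m=1}^\infty$ in ${\rm CF}(P,[0,1])$ whose restrictions to $X$ converge pointwise and monotonically to $\chi_{U_{k,n}}$.

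Setting $v_{n,m}=2^{-n}\sum_{k=1}^{n2^n}g_m^{k,n}|_X\in {\rm CF}(X,[0,n])$ gives functions increasing in $m$ with $v_{n,m}\uparrow u_n$ on $X$. To obtain a single increasing sequence I would diagonalize by
$$
f_n(x)=\max\{v_{k,m}(x):1\le k,m\le n\}.
$$
Then $f_n\in {\rm CF}(X,[0,+\infty))$ because ${\rm CF}(X,\mathbb R)$ is closed under finite maxima, the sequence $(f_n)$ is increasing by construction, and it satisfies $f_n\le f$ (since each $v_{k,m}\le u_k\le f$). Pointwise convergence $f_n(x)\to f(x)$ follows from an $\varepsilon$-chase: given $\varepsilon>0$ choose $k$ with $u_k(x)>f(x)-\varepsilon/2$, then $m$ with $v_{k,m}(x)>u_k(x)-\varepsilon/2$, and take $n\ge\max(k,m)$.

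The main obstacle I anticipate is the step in which the level sets $\{f>k/2^n\}$ are shown to be functionally open in $X$: without this reduction Lemma~\ref{cor:char_closed_stableCF} cannot be invoked on them. The role of the two alternative hypotheses on $P$ and $X$ is precisely to force every open subset of $X$ to be a cozero set; once this is secured, the dyadic step functions and the diagonal maximum are routine.
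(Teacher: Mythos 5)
Your argument takes a genuinely different route from the paper's. You reduce $f$ to the dyadic step functions $u_n=2^{-n}\sum_{k}\chi_{U_{k,n}}$, approximate each characteristic function monotonically via Lemma~\ref{cor:char_closed_stableCF}(a), and reassemble with finite sums, finite maxima and a diagonal $\varepsilon$-chase. The paper instead uses part (b) of the same lemma to produce a countable family $\mathcal F\subseteq{\rm CF}(P,[0,1])$ for which $f$ is lower semicontinuous with respect to the pseudometric $d_{\mathcal F}$, and then applies the classical inf-convolution $g_n(x)=\inf_{z\in X}(f(z)+nd_n(x,z))$ with the truncated pseudometrics $d_n$; these are finitely determined, so one obtains at once an increasing sequence of finitely determined functions, uniformly continuous for $d_{\mathcal F}$ and hence continuous on $X$, converging to $f$. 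In the perfectly normal case your proof is complete and correct: perfect normality is hereditary, so every open subset of $X$ is functionally open, Lemma~\ref{cor:char_closed_stableCF}(a) applies to each $U_{k,n}$, the monotone approximants stay below $\chi_{U_{k,n}}$ so that $f_n\le f$, and your convergence argument is fine; your construction is more elementary than the inf-convolution, at the cost of a doubly indexed family.

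The gap is in the Lindel\"of case. The claim that in a Lindel\"of regular space every open set is $F_\sigma$ is false: in the one-point compactification $A(\omega_1)$ of an uncountable discrete space (compact, hence Lindel\"of and normal) the open set $U$ of isolated points is not $F_\sigma$, because every closed subset of $A(\omega_1)$ contained in $U$ is compact and discrete, hence finite. So you cannot conclude that the sets $U_{k,n}=\{f>k/2^n\}$ are functionally open, and Lemma~\ref{cor:char_closed_stableCF} does not apply to them. Moreover, this cannot be repaired by a better argument: take $X=P=A(\omega_1)\times\{*\}\times\{*\}\times\cdots$, so that ${\rm CF}(X)={\rm C}(X)$, and $f=\chi_U$. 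Then $f$ is lower semicontinuous, but if $f_n\uparrow f$ pointwise with $f_n$ continuous and nonnegative, then $f_n(\infty)=0$ and each set $\{f_n\ge 1/2\}$ is closed, misses $\infty$, hence is finite, while $U\subseteq\bigcup_n\{f_n\ge 1/2\}$ is uncountable. Thus in case (2) the conclusion genuinely requires the superlevel sets of $f$ to be functionally open, as in the hypothesis of Lemma~\ref{cor:char_closed_stableCF}. Note that the paper's own proof is silent on this very point — it applies the lemma to the sets $G_n=f^{-1}((q_n,+\infty))$, which are only known to be open — which is harmless when $P$ is perfectly normal but not when $X$ is merely Lindel\"of; you should restrict your case-(2) claim accordingly rather than assert the false $F_\sigma$ property.
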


\begin{proof} Let $\mathbb Q=\{q_n:n\in\mathbb N\}$ and $B_n=(q_n,+\infty)$. Since $f$ is lower semicontinuous, every set
$G_n=f^{-1}(B_n)$ is open in $X$. Lemma~\ref{cor:char_closed_stableCF} implies the existence of countable family $\mathcal F=(f_n:n\in\mathbb N)$ of functions  $f_n\in{\rm CF}(P,[0,1])$ such that every set $G_n$ is open in  $(X,d_{\mathcal F})$. Moreover, for every $a\ge 0$ there exists a sequence $(n_k)_{k=1}^\infty$ such that  $f^{-1}(a,+\infty)=\bigcup_{k=1}^\infty G_{n_k}$. Then $f^{-1}(a,+\infty)$ is open in $(X,d_{\mathcal F})$. Therefore, $f:(X,d_{\mathcal F})\to [0,+\infty)$ is lower semicontinuous.

For all $n\in\mathbb N$  and $x,y\in P$ we put
$$
d_n(x,y)=\sum_{k=1}^n \frac{1}{2^k}|f_k(x)-f_k(y)|
$$
and observe that
\begin{gather}\label{gath:increase}
  d_n(x,y)\le d_{n+1}(x,y)\le d_{\mathcal F}(x,y)\le d_n(x,y)+\frac{1}{2^n}.
\end{gather}
For all $n\in\mathbb N$ and $x\in X$ let
$$
g_n(x)=\inf_{z\in X} (f(z)+nd_n(x,z)).
$$
Then $g_n$ depends on the first $n$ coordinates, since for every fixed $z\in X$  the function $d_n(x,z)$ depends on on the first $n$ coordinates. Moreover, $g_n(x)\le f(x)$ for every $x\in X$.

Fix $n\in\mathbb N$ and notice that the inequalities
\begin{gather*}
  |g_n(x)-g_n(y)|\le \sup_{z\in X}|nd_n(x,z)-nd_n(y,z)|\le n d_n(x,y)\le nd_{\mathcal F}(x,y)
\end{gather*}
for all $x,y\in X$ imply that $g_n$ is uniformly continuous on $(X,d_{\mathcal F})$.

It follows from~(\ref{gath:increase}) that the sequence $(g_n)_{n=1}^\infty$ increases.

It remains to verify that $\lim_{n\to\infty}g_n(x)=f(x)$ for every $x\in X$. Fix $x\in X$ and $\varepsilon>0$. Take a sequence $(z_n)_{n=1}^\infty$ of points from $X$ with
$$
f(z_n)+nd_n(x,z_n)<g_n(x)+\varepsilon.
$$
Taking into account that $g_n(x)\le f(x)$ for every $n$ and $f(z_n)\ge 0$, we get $\lim_{n\to\infty}d_n(x,z_n)=0$. Then $\lim_{n\to\infty}d_{\mathcal F}(x,z_n)=0$. Since
 $f$ is lower semicontinuous on $(X,d_{\mathcal F})$ at the point $x$, there exists a number $k$ such that $f(z_n)\ge f(x)-\varepsilon$ for all $n\ge k$. Then $g_n(x)>f(z_n)-\varepsilon>f(x)-2\varepsilon$ for all $n\ge k$.
\end{proof}

\begin{cor}
Let $P$ and $X$ be as in Proposition~\ref{thm:char_amb_stableCF} and let $f:X\to\mathbb R$ be a lower semicontinuous function. Then $f$ is a pointwise limit of an increasing sequence of functions from ${\rm CF}(X)$ if and only if $f$ has a minorant from ${\rm CF}(X)$.
\end{cor}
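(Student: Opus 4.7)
\medskip

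The plan is to reduce the corollary to Theorem~\ref{thm:semicont} by subtracting off the minorant. The ``only if'' direction is immediate: if $f = \lim_n f_n$ for an increasing sequence $(f_n)$ in ${\rm CF}(X)$, then $f_1$ itself serves as a minorant of $f$ belonging to ${\rm CF}(X)$.

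For the ``if'' direction, suppose $g\in{\rm CF}(X)$ satisfies $g(x)\le f(x)$ for every $x\in X$. Consider the function $\tilde f=f-g$. Since $f$ is lower semicontinuous and $g$ is continuous, $\tilde f$ is lower semicontinuous on $X$, and by the minorant assumption $\tilde f(x)\ge 0$ for all $x\in X$, so $\tilde f:X\to[0,+\infty)$. Apply Theorem~\ref{thm:semicont} to $\tilde f$: there exists an increasing sequence $(h_n)_{n=1}^\infty$ of functions $h_n\in {\rm CF}(X,[0,+\infty))$ converging pointwise to $\tilde f$.

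Now I would set $f_n:=h_n+g$. Each $f_n$ is continuous, and if $h_n$ depends on the first $k_n$ coordinates and $g$ depends on the first $m$ coordinates, then $f_n$ depends on the first $\max(k_n,m)$ coordinates, so $f_n\in{\rm CF}(X)$. The sequence $(f_n)$ is increasing because $(h_n)$ is, and for every $x\in X$,
\[
\lim_{n\to\infty}f_n(x)=\lim_{n\to\infty}h_n(x)+g(x)=\tilde f(x)+g(x)=f(x),
\]
as required.

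There is essentially no obstacle here; the whole content has been absorbed into Theorem~\ref{thm:semicont}. The only thing worth being careful about is that the minorant assumption is used exactly to ensure $\tilde f$ takes values in $[0,+\infty)$, which is the range for which Theorem~\ref{thm:semicont} is stated, and that the class ${\rm CF}(X)$ is closed under addition, so that shifting back by $g$ keeps us inside the approximating class.
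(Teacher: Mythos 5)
Your proposal is correct and follows essentially the same route as the paper: take a minorant $g\in{\rm CF}(X)$, apply Theorem~\ref{thm:semicont} to the nonnegative lower semicontinuous function $f-g$, and translate the resulting increasing sequence back by adding $g$. The observation that the first term of an increasing approximating sequence provides the minorant for the converse matches the paper's (implicit) ``necessity is obvious'' step.
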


\begin{proof}
  Since the necessity is obvious, we will prove the sufficiency. Assume that $g\in{\rm CF}(X)$ is a minorant for $f$ and put $h=f-g$. Applying Theorem~\ref{thm:semicont} for the function $h:X\to [0,+\infty)$, we obtain the increasing sequence of functions $h_n\in{\rm CF}(X,[0,+\infty))$ which is convergent to  $h$ pointwisely on $X$. It remains to put $f_n=h_n+g$ for every $n\in\mathbb N$.
\end{proof}

\section{Pseudocompact case}\label{sec:LindelofProduct}

We say that a sequence $(f_n)_{n=1}^\infty$ of maps $f_n:X\to Y$ between topological spaces $X$ and $Y$ is  {\it stably convergent to a map $f:X\to Y$ on $X$}, if for every $x\in X$ there exists $k\in\mathbb N$ such that $f_n(x)=f(x)$ for all  $n\ge k$.

\begin{lem}\label{lem:uniform_stable}
 Let $f:X\to Y$ be a map between a topological space $X$ and a metric space $(Y,d_Y)$, $(f_n)_{n=1}^\infty$ be a sequence of maps $f_n:X\to Y$ which is  convergent to $f$ stably on $X$ and let  $(f_{n,k})_{k=1}^\infty$ be a sequence of maps  $f_{n,k}:X\to Y$ which is convergent to $f_n$ uniformly on $X$ for every $n\in\mathbb N$. Then $f\in \overline{\{f_{n,k}:n,k\in\mathbb N\}}^{\,\rm p}$.
\end{lem}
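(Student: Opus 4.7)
The plan is to carry out a standard diagonal construction on the double array $(f_{n,k})_{n,k\in\mathbb N}$. For each $n\in\mathbb N$, the uniform convergence $f_{n,k}\to f_n$ on $X$ lets me choose an index $k_n\in\mathbb N$ such that
$$
\sup_{x\in X}d_Y(f_{n,k_n}(x),f_n(x))\le \frac{1}{n}.
$$
Define $g_n=f_{n,k_n}$, so $g_n\in\{f_{n,k}:n,k\in\mathbb N\}$, and the goal becomes showing that $(g_n)_{n=1}^\infty$ converges to $f$ pointwise on $X$.

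Fix $x\in X$ and $\varepsilon>0$. By stable convergence of $(f_n)$ to $f$, there is $N=N(x)\in\mathbb N$ with $f_n(x)=f(x)$ for every $n\ge N$. Choose $n_0\ge\max\{N,\lceil 1/\varepsilon\rceil\}$. Then for each $n\ge n_0$ the triangle inequality collapses to
$$
d_Y(g_n(x),f(x))=d_Y(f_{n,k_n}(x),f_n(x))\le\frac{1}{n}<\varepsilon,
$$
so $g_n(x)\to f(x)$. This witnesses $f\in\overline{\{f_{n,k}:n,k\in\mathbb N\}}^{\,\rm p}$.

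I do not anticipate a genuine obstacle here: the key leverage is that stable convergence turns the outer limit into an eventual equality, so only one layer of approximation (the uniform one) actually has to be controlled, which is what makes the single diagonal $k_n$ depending on $n$ alone — rather than on $x$ — sufficient. The only point worth double-checking is that the sequence $(g_n)$ really is drawn from the indexed family as required; since each $g_n$ equals $f_{n,k_n}$ for some pair $(n,k_n)\in\mathbb N\times\mathbb N$, this is immediate.
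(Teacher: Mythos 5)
Your proof is correct and follows essentially the same route as the paper: the paper first re-indexes (``without loss of generality'') so that $\sup_x d_Y(f_{n,k}(x),f_n(x))<2^{-k}$ and then takes the diagonal $f_{n,n}$, using stable convergence to replace $f_n(x)$ by $f(x)$ for large $n$, which is exactly your explicit choice of $k_n$ with error at most $1/n$ followed by the same two-step estimate. (Only cosmetic point: with $n_0=\lceil 1/\varepsilon\rceil$ you may get $1/n=\varepsilon$ rather than $<\varepsilon$ at $n=n_0$; take $n_0>1/\varepsilon$.)
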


\begin{proof} Without loss of generality we may assume that
 \begin{gather*}
   d_Y(f_{n,k}(x),f_n(x))<\frac{1}{2^k},\\
   d_Y(f_{n,k+1}(x),f_{n,k}(x))<\frac{1}{2^k}
 \end{gather*}
for all $x\in X$ and $n,k\in\mathbb N$.

Fix  $x\in X$ and $\varepsilon>0$. We choose $m\in\mathbb N$ such that
\begin{gather*}
  \frac{1}{2^{m-3}}<\varepsilon\quad\mbox{and}\quad f_n(x)=f(x)\quad\forall n\ge m.
\end{gather*}
Then we get
\begin{gather*}
  d_Y(f_{n,n}(x),f(x))\le \sum\limits_{k=m}^{n-1} d_Y(f_{n,k+1}(x),f_{n,k}(x))+d_Y(f_{n,m}(x),f_n(x))<\\
  <\sum\limits_{k=m}^{n-1}\frac{1}{2^k}+\frac{1}{2^m}<\frac{1}{2^{m-2}}+\frac{1}{2^m}<\frac{1}{2^{m-3}}<\varepsilon
\end{gather*}
 for all $n\ge m$. Therefore, $\lim\limits_{n\to\infty}f_{n,n}(x)=f(x)$.
\end{proof}

\begin{lem}\label{lem:pseudocomp}
  Let $P=\prod_{n=1}^\infty X_n$ be a pseudocompact space and $(Y,d_Y)$ be a path-connected metric space. Then every Baire one function $f:X\to Y$ with countable discrete image $f(X)$ is a pointwise limit of a sequence of functions from ${\rm CF}(X,Y)$.

\end{lem}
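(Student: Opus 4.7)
The plan is to reduce the $Y$-valued approximation of $f$ to a scalar approximation of an integer-valued Baire one function, and then to exploit pseudocompactness to pass from uniform approximation by continuous functions to uniform approximation by finitely determined ones; I take the domain in the statement to be $X=P$. Enumerate $f(P)=\{y_n:n\in\mathbb N\}$; since $\{y_n\}$ is discrete in the metric space $Y$, each number $\delta_n:=\inf_{k\neq n}d_Y(y_n,y_k)$ is strictly positive, and the balls $U_n:=B(y_n,\delta_n/3)$ are pairwise disjoint with $U_n\cap f(P)=\{y_n\}$. Fix continuous bumps $\phi_n:Y\to[0,1]$ with $\phi_n(y_n)=1$ and $\mathrm{supp}\,\phi_n\subseteq U_n$, and a continuous path $\gamma:\mathbb R\to Y$ with $\gamma(n)=y_n$ for all $n\in\mathbb N$, obtained by concatenating paths in the path-connected $Y$ and extending by constants. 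Define $g:P\to\mathbb N\subset\mathbb R$ by $g(x)=n$ when $f(x)=y_n$; then $f=\gamma\circ g$.

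Write $f=\lim_m f_m$ pointwise with $f_m:P\to Y$ continuous, and set $g_m(x):=\sum_{n=1}^m n\cdot\phi_n(f_m(x))$. Each $g_m$ is a finite sum of continuous functions, hence continuous on $P$. For fixed $x\in P$ with $f(x)=y_{n_0}$, the inclusion $f_m(x)\in U_{n_0}$ holds for all sufficiently large $m$, and disjointness of the supports collapses the sum to $g_m(x)=n_0\phi_{n_0}(f_m(x))\to n_0=g(x)$, showing $g_m\to g$ pointwise. To strengthen this to stable convergence, compose with the continuous piecewise linear snap map $h:\mathbb R\to\mathbb R$ defined by $h(t)=k$ on $[k-\tfrac13,k+\tfrac13]$ for every integer $k$ and interpolated linearly in between; since $g$ is $\mathbb N$-valued, $\tilde g_m:=h\circ g_m$ satisfies $\tilde g_m(x)=g(x)$ eventually, so $\tilde g_m\to g$ stably on $P$.

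Now invoke pseudocompactness. Each $\tilde g_m$ is a bounded continuous real-valued function on $P$, so by Glicksberg's theorem $\beta P=\prod_{n=1}^\infty\beta X_n$ is compact and $\tilde g_m$ extends to a continuous function on this compactification. On the compact product, the subalgebra of continuous functions depending on finitely many coordinates contains the constants, is closed under the algebra operations, and separates points (by Urysohn applied in each factor), hence by Stone--Weierstrass it is uniformly dense in $C(\beta P)$. Restricting back yields $\tilde g_{m,k}\in{\rm CF}(P,\mathbb R)$ with $\tilde g_{m,k}\to\tilde g_m$ uniformly on $P$. Combining the stable convergence $\tilde g_m\to g$ with these uniform approximations via Lemma~\ref{lem:uniform_stable} gives $g\in\overline{{\rm CF}(P,\mathbb R)}^{\,\rm p}$. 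Writing $g=\lim_k q_k$ pointwise with $q_k\in{\rm CF}(P,\mathbb R)$ and setting $f_k:=\gamma\circ q_k\in{\rm CF}(P,Y)$ then finishes the proof, because $f_k(x)\to\gamma(g(x))=f(x)$ for every $x\in P$.

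The main obstacle is the Glicksberg--Stone--Weierstrass step: on a general countable product the finitely determined continuous functions need not be uniformly dense, and it is precisely pseudocompactness that permits compactification to $\prod\beta X_n$ and makes Stone--Weierstrass available. This is what substitutes for the perfect normality or Lindel\"of hypotheses used in the earlier sections.
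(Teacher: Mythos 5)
Your argument is correct, and its skeleton coincides with the paper's: produce a sequence of continuous functions converging stably to the target, uniformly approximate each member of that sequence by finitely determined continuous functions, and finish with Lemma~\ref{lem:uniform_stable}. The difference is that the paper simply cites both ingredients, while you prove them. The paper gets a sequence of continuous $Y$-valued maps stably convergent to $f$ from \cite[Lemma 4]{Karlova:2005}, and the uniform ${\rm CF}(P,Y)$-approximation of each of these from \cite[Theorem 1]{Bykov}. You instead reduce to the scalar case by factoring $f=\gamma\circ g$ through an integer-valued function $g$ and a path $\gamma$ (the same device the paper uses in Proposition~\ref{prop:finite_valued_in_arc}), construct the stably convergent continuous sequence explicitly with the disjoint bumps $\phi_n$ and the ``snap'' map $h$ (in effect re-proving the case of \cite[Lemma 4]{Karlova:2005} that is needed here), and obtain uniform density of ${\rm CF}(P,\mathbb R)$ among bounded continuous functions from Glicksberg's theorem $\beta P=\prod_{n}\beta X_n$ together with Stone--Weierstrass on the compact product, which is essentially the real-valued case of \cite[Theorem 1]{Bykov}. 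Composing the resulting scalar ${\rm CF}$-approximations of $g$ with $\gamma$ at the very end means you never need a $Y$-valued uniform approximation theorem; the only hypotheses you use beyond the statement are the (implicit in the paper) assumption that the factors $X_n$ are Tychonoff, which Glicksberg's theorem requires, and a trivial modification when $f(P)$ is finite. Your route buys self-containedness and shows that only path-connectedness of $Y$ matters once the scalar case is settled; the paper's route buys brevity by invoking Bykov's uniform approximation theorem directly for $Y$-valued maps.
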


\begin{proof}
  It  follows from \cite[Lemma 4]{Karlova:2005} that there exists a sequence $(f_n)_{n=1}^\infty$ of continuous functions $f_n:P\to Y$ which is stably convergent to $f$  on $P$. By \cite[Theorem 1]{Bykov} every function $f_n$ is the uniform limit of a sequence from ${\rm CF}(P,Y)$. It remains to apply Lemma~\ref{lem:uniform_stable}.
 \end{proof}

 \begin{thm}\label{thm:pseudocomp}
  Let $P=\prod_{n=1}^\infty X_n$ be a pseudocompact space and $(Y,d_Y)$ be a path-connected separable metric R-space. Then every Baire one function $f:P\to Y$ is a pointwise limit of a sequence of functions from ${\rm CF}(P,Y)$.
 \end{thm}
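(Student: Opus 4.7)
The plan is to mirror the proof of Theorem~\ref{thm:L_one_is_CF_limit}, replacing Proposition~\ref{prop:finite_valued_in_arc} with Lemma~\ref{lem:pseudocomp}. Given a Baire one map $f:P\to Y$, I would first extract a uniformly convergent sequence $(f_n)_{n=1}^\infty$ of Baire one functions $f_n:P\to Y$ whose images $f_n(P)$ are countable and discrete in $Y$. This is the content of Lemma~\ref{lem:unif_lim} when the domain is normal; for pseudocompact $P$ one writes $f=\lim_k g_k$ with $g_k:P\to Y$ continuous, observes that each $g_k(P)$ is compact as a continuous metric image of a pseudocompact space, and carries out the discretization on the separable $\sigma$-compact subset $\overline{\bigcup_k g_k(P)}$ of $Y$, which contains $f(P)$.

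Next, each $f_n$ satisfies the hypotheses of Lemma~\ref{lem:pseudocomp} (Baire one, countable discrete image, $Y$ path-connected metric, $P$ pseudocompact), which places $f_n$ in $\overline{{\rm CF}(P,Y)}^{\,{\rm p}}$. Finally, because $(Y,d_Y)$ is an R-space and the family ${\rm CF}(P,Y)$ is visibly $\Delta$-closed (the composition of any continuous map $Y^S\to Y$ with a finite diagonal of maps in ${\rm CF}(P,Y)$ is again continuous and finitely determined, since the union of finitely many finite coordinate-dependence sets is finite), Theorem~\ref{thm:unif_lim_Delta} implies that $\overline{{\rm CF}(P,Y)}^{\,{\rm p}}$ is closed under uniform limits. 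The uniformly convergent sequence $(f_n)$ then witnesses $f\in\overline{{\rm CF}(P,Y)}^{\,{\rm p}}$.

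The only nontrivial point is the first step: a pseudocompact space need not be normal, so Lemma~\ref{lem:unif_lim} does not literally apply. I expect this to be the main technical obstacle, and would handle it by exploiting the compactness of the images $g_k(P)$ to partition the separable $\sigma$-compact set $\overline{\bigcup_k g_k(P)}$ into Borel pieces $(E_{n,i})_i$ of diameter at most $1/n$, picking a representative $y_{n,i}\in E_{n,i}$ for each, and setting $f_n(x)=y_{n,i}$ whenever $f(x)\in E_{n,i}$. By construction $\sup_{x\in P}d_Y(f_n(x),f(x))\le 1/n$, and the requisite Baire one character of $f_n$ follows from that of $f$. Once this uniform approximation is in place, Lemma~\ref{lem:pseudocomp} and Theorem~\ref{thm:unif_lim_Delta} combine routinely to finish the argument.
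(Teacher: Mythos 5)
Your overall architecture coincides with the paper's: approximate $f$ uniformly by Baire one maps with countable discrete image, apply Lemma~\ref{lem:pseudocomp} to each approximant, and finish with Theorem~\ref{thm:unif_lim_Delta} via the $\Delta$-closedness of ${\rm CF}(P,Y)$. You also correctly locate the difficulty in the first step. But your proposed substitute for Lemma~\ref{lem:unif_lim} has a genuine gap at exactly that point. Setting $f_n=\phi_n\circ f$, where $\phi_n$ collapses each Borel piece $E_{n,i}$ to a representative $y_{n,i}$, you assert that ``the requisite Baire one character of $f_n$ follows from that of $f$.'' It does not: $\phi_n$ is discontinuous, and post-composing a Baire one map with a Borel (even piecewise constant) map does not in general preserve Baire class one. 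At best you obtain that the preimages $f^{-1}(E_{n,i})$ are Borel-type sets, i.e.\ that $f_n$ lies in some Lebesgue class; but Lemma~\ref{lem:pseudocomp} needs $f_n\in{\rm B}_1(P,Y)$ --- an honest pointwise limit of continuous maps --- because its proof feeds $f_n$ into \cite[Lemma 4]{Karlova:2005} to produce a stably convergent sequence of continuous functions. Since a pseudocompact product need not be normal, let alone perfectly normal, you cannot upgrade measurability to Baire one by the usual arguments, so the very point you flag as ``the main technical obstacle'' is left unproved. A secondary issue: an arbitrary partition into pieces of diameter at most $1/n$ with chosen representatives does not make the image $\{y_{n,i}\}$ discrete in $Y$, which Lemma~\ref{lem:pseudocomp} also requires; discreteness has to be built into the construction.

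The paper closes this gap by invoking \cite[Lemma 3.3]{Karlova:2004} together with \cite[Lemma 4]{Karlova:2005}, which directly yield a uniformly convergent sequence of Baire one maps $f_n:P\to Y$ with discrete countable images; there the discretization is carried out at the level of the continuous (stably convergent) approximants of $f$, not by composing $f$ with a discontinuous quotient of $Y$. If you want a self-contained argument, you would have to prove something of that strength --- for instance, that the pieces can be chosen so that the sets $f^{-1}(E_{n,i})$ are functionally ambiguous and that the resulting countably valued map is a pointwise limit of continuous functions on $P$ --- and that is not routine on a non-normal domain. As written, the first step of your proof is not justified, while the remaining two steps (Lemma~\ref{lem:pseudocomp} and Theorem~\ref{thm:unif_lim_Delta}) are applied correctly and agree with the paper.
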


   \begin{proof} Let $f\in{\rm B}_1(P,Y)$. It follows from \cite[Lemma 3.3]{Karlova:2004} and \cite[Lemma 4]{Karlova:2005} that there exists a sequence of functions $f_n\in{\rm B}_1(P,Y)$ with discrete countable image $f(P)$ which is uniformly convergent to $f$. Then $f_n\in \overline{{\rm CF}(P,Y)}^{\,\rm p}$ for every $n$ by Lemma~\ref{lem:pseudocomp}. Since the family ${\rm CF}(P,Y)$ is $\Delta$-closed,  $f\in \overline{{\rm CF}(P,Y)}^{\,\rm p}$ by Theorem~\ref{thm:unif_lim_Delta}.
\end{proof}

The authors do not know an answer to the following question.

\begin{quest}
  Let $X\subseteq\prod_{n=1}^\infty X_n$ be a pseudocompact subspace of a product of completely regular spaces $X_n$ and $f:X\to\mathbb R$ be a Baire one function. Does there exist a sequence of functions from ${\rm CF}(X)$ which is pointwisely convergent to $f$ on $X$?
\end{quest}

\section{Example}

Denote by $\bf C$ the Cantor set ${\bf C}\subseteq [0,1]$. For every $n\in\mathbb N$ and $k\in \{1, \dots, n\}$ we denote by $p_{k,n}$ the natural projection $p_{k,n}:{\bf C}^n\to {\bf C}$, $p_{k.n}(x_1, \dots x_n)=x_k$. We put
$$
{\rm r}_n(A)=\bigcup_{k=1}^n p_{k,n}^{-1}(A)
$$
for every $n\in \mathbb N$ and for an infinite set $A\subseteq {\bf C}$.
Following to \cite[Fact 3, p.~124]{Tkachuk}, for every $n\in\mathbb N$ and $P\subseteq {\bf C}^n$ we put
$$
|P|_n=\min\{|A|: A\subseteq {\bf C}\,\,{\rm and}\,\,P\subseteq {\rm r}_n(A)\}.
$$
According to \cite[Fact 4, p.~125]{Tkachuk}, we have that if $|F|_n>\aleph_0$ for some $n\in\mathbb N$ and a closed in ${\bf C}^n$ set $F$ then $|F|_n=\frak{c}$.

\begin{lem}\label{lem:set-sequence}
  There exists a sequence $(A_n)_{n=1}^\infty$ of sets $A_n\subseteq {\bf C}$ such that the following conditions hold:
\begin{enumerate}
 \item[(1)] ${\bf C}=\bigsqcup_{n=1}^\infty A_n$;

\item[(2)] $(A_n)^m\cap F\ne \emptyset$ for all $n,m\in\mathbb N$ and a closed set $F$ in ${\bf C}^m$ with $|F|_m=\frak c$.
\end{enumerate}
\end{lem}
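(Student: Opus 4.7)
The plan is a transfinite recursion of length $\mathfrak{c}$, in which at each stage the hypothesis $|F|_m=\mathfrak{c}$ will be just strong enough to place a fresh witnessing tuple into the prescribed $A_n$. First I note that $\mathbf{C}^m$ is second countable and hence has only $\mathfrak{c}$ closed subsets; consequently the family of all triples $(n,m,F)$ with $n,m\in\mathbb{N}$ and $F\subseteq\mathbf{C}^m$ closed satisfying $|F|_m=\mathfrak{c}$ can be enumerated as $\{(n_\alpha,m_\alpha,F_\alpha):\alpha<\mathfrak{c}\}$.

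By recursion on $\alpha<\mathfrak{c}$ I shall build pairwise disjoint families $(C_n^\alpha)_{n\in\mathbb{N}}$ of subsets of $\mathbf{C}$, increasing in $\alpha$, keeping the invariant $|D_\alpha|<\mathfrak{c}$, where $D_\alpha=\bigcup_n C_n^\alpha$. At stage $\alpha$, since $|D_\alpha|<\mathfrak{c}=|F_\alpha|_{m_\alpha}$, the very definition of $|\cdot|_{m_\alpha}$ gives $F_\alpha\not\subseteq\mathrm{r}_{m_\alpha}(D_\alpha)$, so I can pick a point $x=(x^1,\dots,x^{m_\alpha})\in F_\alpha$ with $x^k\notin D_\alpha$ for every $k$; I then adjoin $\{x^1,\dots,x^{m_\alpha}\}$ to $C_{n_\alpha}$ and leave the other $C_n$'s unchanged. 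At limit ordinals I take unions. Since each stage contributes at most $m_\alpha<\aleph_0$ new points, $|D_\alpha|\le\max(|\alpha|,\aleph_0)<\mathfrak{c}$ is preserved, so the invariant is self-supporting.

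To finish, I set $A_n=C_n^{\mathfrak{c}}$ for $n\ge 2$ and $A_1=C_1^{\mathfrak{c}}\cup(\mathbf{C}\setminus\bigcup_n C_n^{\mathfrak{c}})$, which produces a partition of $\mathbf{C}$ by construction and gives (1); condition (2) holds because for every admissible $(n,m,F)$ some $\alpha$ satisfies $(n_\alpha,m_\alpha,F_\alpha)=(n,m,F)$, and the tuple chosen at that stage lies in $F\cap A_n^m$. The only conceptually nontrivial step is the extraction at stage $\alpha$, but this is essentially a restatement of the definition of $|\cdot|_m$: a tuple in $F_\alpha$ whose coordinates avoid $D_\alpha$ is exactly a point of $F_\alpha\setminus\mathrm{r}_{m_\alpha}(D_\alpha)$, and the non-inclusion $F_\alpha\not\subseteq\mathrm{r}_{m_\alpha}(D_\alpha)$ follows directly from $|D_\alpha|<|F_\alpha|_{m_\alpha}$. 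The remaining items to verify carefully are the count of closed subsets of $\mathbf{C}^m$ and the cardinality bookkeeping through limit ordinals, both of which are routine.
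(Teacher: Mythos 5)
Your proof is correct and follows essentially the same transfinite-recursion scheme as the paper: enumerate in length $\mathfrak c$ the closed sets with $|F|_m=\mathfrak c$, use that a set $D$ of size $<\mathfrak c$ cannot satisfy $F\subseteq{\rm r}_m(D)$ to pick a fresh witnessing tuple at each stage, and dump the leftover points into $A_1$. The only difference is bookkeeping: you enumerate triples $(n,m,F)$ and serve one $n$ per stage, whereas the paper enumerates the sets $F_\xi$ and at each stage builds disjoint finite witnesses for all $n$ simultaneously; both are equivalent.
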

\begin{proof}
For every $m\in\mathbb N$ we denote by ${\mathcal F}_m$ the system of all closed sets $F\subseteq {\bf C}^m$ with $|F|_m=\frak c$. Clearly, $|{\mathcal F}_m|=\frak c$ for every $m\in\mathbb N$. Let $\bigcup_{m=1}^\infty{\mathcal F}_m= \{F_{\xi}:1\leq \xi<\frak c\}$. For every $\xi<\frak c$ we denote by $m_\xi$ an integer $m_\xi\in\mathbb N$ such that $F_\xi\subseteq {\bf C}^{m_\xi}$. By transfinite induction on $\xi$ we construct a family $((A_{\xi,n})_{n=1}^\infty:1\leq \xi<\frak c)$ of sequences $(A_{\xi,n})_{n=1}^\infty$ of finite sets $A_{\xi,n}\subseteq {\bf C}$ which satisfies the following conditions:
\begin{enumerate}
\item[(a)] $A_{\xi,n}\cap A_{\theta,m}=\emptyset$ for $(\xi,n)\ne (\theta,m)$;

\item[(b)] $(A_{\xi,n})^{m_\xi}\cap F_{\xi}\ne\emptyset$ for every $\xi<\frak c$ and $n\in\mathbb N$.
\end{enumerate}

Assume that $1\leq \alpha < \frak c$ and the family $((A_{\xi,n})_{n=1}^\infty:1\leq \xi<\alpha)$ is constructed. We put
$$B_{\alpha}=\bigcup_{1\leq\xi<\alpha}\bigcup_{n=1}^\infty A_{\xi,n}.$$
Clearly, $|B_\alpha|<\frak c$. Since ${\rm r}_{m_\alpha}(F_\alpha)=\frak c$, we can  construct easily by the induction on $n$ a sequence $(A_{\alpha,n})_{n=1}^\infty$  of disjoint finite sets $A_{\alpha,n}\subseteq {\bf C}\setminus B_\alpha$ such that $(A_{\alpha,n})^{m_\alpha}\cap F_{\alpha}\ne\emptyset$ for every $n\in\mathbb N$. It remains to put $$A_n=\bigcup_{1\leq \xi<\alpha} A_{\xi,n}$$
for every $n\geq 2$ and $$A_1={\bf C}\setminus \left(\bigcup_{n\geq 2}A_n\right).$$
\end{proof}

\begin{lem}\label{lem:function-second}
  There exists a function $g:{\bf C}\to\mathbb R$ of the second Baire class such that for every countable set $A\subseteq {\bf C}$ the restriction $g|_{{\bf C}\setminus A}$ is not a function of the first Baire class.
\end{lem}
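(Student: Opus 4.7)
The plan is to take the partition $(A_n)_{n=1}^\infty$ of $\mathbf{C}$ given by Lemma~\ref{lem:set-sequence} and try the natural candidate $g(x)=1/n$ whenever $x\in A_n$. Two things then have to be verified: that $g$ is of the second Baire class, and that $g|_{\mathbf{C}\setminus A}$ is not of the first Baire class for any countable $A\subseteq\mathbf{C}$.

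For the restriction property I would argue by contradiction. Suppose there are continuous $f_k\colon\mathbf{C}\setminus A\to\mathbb R$ with $f_k\to g|_{\mathbf{C}\setminus A}$ pointwise. Choose pairwise disjoint open intervals $V_n\ni 1/n$ (say $V_n=(1/n-\varepsilon_n,1/n+\varepsilon_n)$ with $\varepsilon_n$ small enough) and set
\[
B_n \;=\; \bigcup_{N\in\mathbb N}\bigcap_{k\ge N} f_k^{-1}(V_n).
\]
Each $B_n$ is a $G_{\delta\sigma}$ subset of $\mathbf{C}\setminus A$, and hence a Borel subset of $\mathbf{C}$. Pointwise convergence forces $A_n\setminus A\subseteq B_n$, while disjointness of the $V_n$'s forces $B_n\cap A_m=\emptyset$ for every $m\ne n$. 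Since $|A_n\setminus A|=\mathfrak c$, the Borel set $B_n$ is uncountable and therefore contains a perfect set $P_n$. Condition~(2) of Lemma~\ref{lem:set-sequence} applied with $m=1$ yields $A_m\cap P_n\ne\emptyset$ for every $m$; taking $m\ne n$ contradicts $B_n\cap A_m=\emptyset$.

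The main obstacle is to show that $g$ is of second Baire class. The Bernstein property of each $A_n$ combined with their pairwise disjointness forbids any $A_n$ from containing a perfect subset; by the perfect set theorem no $A_n$ can be Borel, so the naive $g$ is not Borel measurable and cannot be realised as a pointwise limit of Baire class~1 functions. To get around this, $g$ has to be replaced by a carefully chosen pointwise limit of Baire class~1 functions $g_k\colon\mathbf{C}\to\mathbb R$ built in parallel with the transfinite induction of Lemma~\ref{lem:set-sequence}. The higher-dimensional conditions~(2) for $m\ge 2$ — applied for instance to closed subsets of $\mathbf{C}^m$ arising as graphs of sequences of continuous functions on $\mathbf{C}\setminus A$ — are expected to be precisely what makes the witnessing argument of the previous paragraph survive for every countable $A$ while still giving $g=\lim g_k$ with each $g_k$ Baire~1. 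Making this simultaneous construction explicit is the delicate remaining step, and the one on which the whole lemma rests.
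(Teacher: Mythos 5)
There is a genuine gap, and you have in fact named it yourself: your candidate $g$ (constant equal to $1/n$ on each $A_n$) is not of the second Baire class, and no repair of the kind you sketch can make it so. Condition (2) of Lemma~\ref{lem:set-sequence} with $m=1$ forces each $A_n$ to meet every perfect subset of $\mathbf{C}$, so the pairwise disjoint sets $A_n$ are Bernstein-type and non-Borel; any function of the second Baire class is Borel measurable, hence \emph{no} function taking distinct constant values on the $A_n$'s can be of any Baire class. The closing paragraph of your proposal (``replace $g$ by a carefully chosen pointwise limit of Baire class~1 functions built in parallel with the transfinite induction'') is not an argument but a restatement of the problem: once $g$ is no longer constant on the $A_n$'s, your witnessing argument with the sets $B_n=\bigcup_N\bigcap_{k\ge N}f_k^{-1}(V_n)$ (which is fine as far as it goes, modulo choosing the $V_n$ with disjoint closures) loses exactly the inclusion $A_n\setminus A\subseteq B_n$ and the disjointness $B_n\cap A_m=\emptyset$ on which it rests. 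So the lemma is not proved.

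For comparison, the paper's proof does not use Lemma~\ref{lem:set-sequence} at all (that lemma is only needed later, in Theorem~\ref{th:example}); it is a short measure--category argument. Work on a ``fat'' Cantor set $X\subseteq[0,1]$ (homeomorphic to $\mathbf{C}$, every relatively open set of positive Lebesgue measure), choose an $F_\sigma$ set $B\subseteq X$ of the first category with $\mu(B)=\mu(X)$, and let $g=\chi_B$. Then $g$ is of the second Baire class because $B$ is $F_\sigma$. For any countable $A$, both $B\setminus A$ (full measure) and $(X\setminus B)\setminus A$ (residual) remain dense, so $g|_{X\setminus A}$ is everywhere discontinuous; since $X\setminus A$ is a Baire space and a Baire-one function on a Baire space has points of continuity, $g|_{X\setminus A}$ is not Baire one. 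If you want to salvage your write-up, replacing your candidate and the $B_n$-argument by this construction is the direct route; your restriction argument by itself cannot be kept, because it is tied to a function that fails the second-Baire-class requirement.
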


\begin{proof} Let $\mu$ be the Lebesgue measure on $[0,1]$. We prove the statement for a function $g:X\to\mathbb R$ defined on a Cantor type closed set $X\subseteq [0,1]$ which is homeomorphic to ${\bf C}$ and $\mu(G)>0$ for every open set $G\subseteq X$.

We choose an $F_\sigma$-set $B$ of the first category in $X$ such that $\mu(B)=\mu(X)$ and put $g=\chi|_B$. Since $B$ is $F_\sigma$, the function $g$ belongs to  the second Baire class. Moreover, for every countable set $A\subseteq X$ the function $g|_{X\setminus A}$ is everywhere discontinuous. Therefore, the restriction $g|_{X\setminus A}$ is not a Baire one function, because the space $X\setminus A$ is Baire.
\end{proof}

\begin{thm}\label{th:example}
There exist a sequence $(X_n)_{n=1}^\infty$ of Lindel$\ddot{o}$f spaces $X_n$  and a function $f:\prod_{n=1}^\infty X_n\to\mathbb R$ of the first Baire class such that
\begin{enumerate}
\item[(i)] for every $n\in\mathbb N$ the product $Y_n=\prod_{k=1}^n X_k$ is Lindel$\ddot{o}$f;

\item[(ii)] $f$ is not a pointwise limit of any sequence $(f_n)_{n=1}^\infty$ of functions from ${\rm CF}(\prod_{n=1}^\infty X_n)$.
\end{enumerate}
\end{thm}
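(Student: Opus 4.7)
To give a negative answer to Question~\ref{question:Bykov1}, my plan is to exploit the combinatorial richness of the partition $(A_n)$ provided by Lemma~\ref{lem:set-sequence} together with the pathological function $g$ supplied by Lemma~\ref{lem:function-second}. The rough scheme is: endow each underlying Cantor set $X_n$ with a carefully chosen non-metrizable Lindel\"of topology, assemble them into $P = \prod_{n=1}^\infty X_n$, build a Baire one function $f$ on $P$ from $g$, and then show that any pointwise CF-approximation of $f$ would force $g$ to become first Baire class on a cocountable subset of $\mathbf{C}$, contradicting Lemma~\ref{lem:function-second}.

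In detail, I would first define a topology $\tau_n$ on $X_n = \mathbf{C}$ that refines the Cantor topology and in which the sets of the partition $(A_k)$ play a distinguished role (for instance by declaring $A_n$ to be open, so that it becomes clopen, with further refinements if needed to destroy metrizability). The fact that each finite product $Y_n = \prod_{k=1}^n X_k$ is Lindel\"of will be a consequence of property~(2) of Lemma~\ref{lem:set-sequence} combined with the Tkachuk bound (Fact~4, p.~125 of \cite{Tkachuk}): a closed subset $F \subseteq \mathbf{C}^m$ that is not contained in ${\rm r}_m(A)$ for any countable $A \subseteq \mathbf{C}$ must satisfy $|F|_m = \mathfrak c$, and then condition~(2) forces $(A_n)^m \cap F \ne \varnothing$ for every $n$, so that a countable subcover of any open cover can be extracted by a Tkachuk-style transfinite induction.

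Next I would construct $f : P \to \mathbb R$ as a natural transcription of $g$, for example $f(x) = g(x_{\nu(x)})$, with $\nu(x)$ a canonical index determined by the partition structure. The second Baire class character of $g$ together with the refined topology on each $X_n$ allow one to exhibit $f$ as a pointwise limit of continuous functions on $P$, so $f \in {\rm B}_1(P,\mathbb R)$. For the negative part, suppose $f = \lim_m f_m$ with each $f_m \in {\rm CF}(P)$ depending on the first $k_m$ coordinates. Continuity of $f_m$ on the Lindel\"of space $Y_{k_m}$ yields, by a Lindel\"of extraction argument applied to basic neighbourhoods of the coordinate axes, a countable set $A \subseteq \mathbf{C}$ off of which the evaluation of $f$ along suitable constant-coordinate sections is controlled; the pointwise convergence $f_m \to f$ then produces a first Baire class representation of $g|_{\mathbf{C} \setminus A}$, contradicting Lemma~\ref{lem:function-second}.

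The principal obstacle is this first stage: choosing $\tau_n$ so that it is simultaneously Lindel\"of with Lindel\"of finite powers (a coarseness condition, handled through Lemma~\ref{lem:set-sequence}(2) and Tkachuk's theorem) and non-metrizable (so that Theorem~\ref{thm:Bykov} does not apply to $P$ and trivialise the counterexample), all while keeping $f$ Baire one on $P$. Once the right topology is in place, the extraction of the exceptional countable set $A$ and the final appeal to Lemma~\ref{lem:function-second} should be comparatively routine.
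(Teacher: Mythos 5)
Your overall strategy coincides with the paper's: refine the topology of each copy of $\mathbf C$ using the partition of Lemma~\ref{lem:set-sequence}, get Lindel\"of finite products from Tkachuk's facts, build a Baire one $f$ on the product out of the function $g$ of Lemma~\ref{lem:function-second}, and derive a contradiction by showing that a CF-approximation of $f$ would make $g$ Baire one off a countable set. However, the proposal has genuine gaps exactly at the two steps that carry the proof. First, the topology: declaring the set $A_n$ open (clopen) is not the right refinement; the construction needs every \emph{point} of $A_n$ to be isolated in the $n$-th factor, i.e.\ $\tau_n$ is generated by $\tau_0\cup\{\{x\}:x\in A_n\}$. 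The isolated points are indispensable later, because for $a\in A_m$ the set $\{x: x_m=a\}$ is a neighbourhood in the product, and this is what makes the key extension argument work. Second, and more seriously, you never prove that your $f$ is Baire one, and the candidate $f(x)=g(x_{\nu(x)})$ is not even clearly well defined. This is the technical heart of the theorem: the paper takes $f$ equal to $g$ on the diagonal $\Delta=\{(a,a,\dots)\}$ and $0$ elsewhere, and needs two nontrivial claims --- (A) every Baire one $v:{\bf C}\to\mathbb R$ extends to a \emph{continuous} $u$ on the product with $u(a,a,\dots)=v(a)$ (here the isolated coordinate $x_m=a$, $a\in A_m$, is used in an explicit gluing along the sets $F_n\setminus F_{n+1}$), and (B) consequently, for any Baire \emph{two} $w$ on $\bf C$ the diagonal function is Baire one on the product. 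Saying that ``the refined topology allows one to exhibit $f$ as a pointwise limit of continuous functions'' asserts precisely what must be proved; without an argument of the type (A)--(B) there is no reason a Baire-two $g$ yields a Baire-one $f$.

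The contradiction step is also misdirected. Lindel\"ofness of the finite products plays no role there: if $f=\lim_n f_n$ with $f_n\in{\rm CF}$ depending on the first $n$ coordinates, one restricts to the diagonal, $g_n(z)=f_n(z,z,\dots)$, and observes that $g_n$ is continuous in the \emph{usual} Cantor topology at every $z\notin\bigcup_{k\le n}A_k$, since only points of $A_1,\dots,A_n$ are isolated in the first $n$ factors. The countability of the exceptional set then comes from condition (2) of Lemma~\ref{lem:set-sequence} combined with Tkachuk's Fact~4: an uncountable closed $F\subseteq{\bf C}$ has $|F|_1=\mathfrak c$, hence meets every $A_j$ (also for $j>n$), so no uncountable closed set lies in $\bigcup_{k\le n}A_k$ and the discontinuity set of each $g_n$ is countable. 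Your ``Lindel\"of extraction argument applied to basic neighbourhoods of the coordinate axes'' does not produce this countable set, and as stated it is not clear it could: the needed smallness is combinatorial (via $|\cdot|_m$ and the partition), not a covering property of $Y_{k_m}$. So the skeleton is right, but the two decisive mechanisms --- the diagonal extension claims establishing $f\in{\rm B}_1$, and the countability of the discontinuity sets of the diagonal restrictions --- are missing.
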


\begin{proof} Let $(A_n)_{n=1}^\infty$ be a sequence of sets $A_n\subseteq {\bf C}$ which satisfy conditions $(1)$ and $(2)$ of Lemma \ref{lem:set-sequence}. Let $\tau_0$ be the usual topology of the Cantor set $\bf C$. For every $n\in\mathbb N$ we denote by $\tau_n$ the topology on ${\bf C}$ with the base
$$
\tau_0\cup\{\{x\}:x\in A_n\}.
$$
Put $X_n=({\bf C}, \tau_n)$ for every $n\in\mathbb N$.

{\bf (i)}. Fix $n\in\mathbb N$ and put $B_n=\bigcup_{k=1}^nA_k$. Consider the space $Z=({\bf C},\sigma)$, where $\sigma$ is a topology on ${\bf C}$ with the base
$$
\tau_0\cup\{\{x\}:x\in B_n\}.
$$
According to \cite[Fact 6, p.~126]{Tkachuk}, the space $Z^n$ is Lindel\"{o}f. Since the topology of the product $Y_n$ is weaker than the topology of $Z^n$, $Y_n$ is Lindel\"{o}f too.

{\bf (ii)}. We start with auxiliary properties of functions on the space $X=\prod_{n=1}^\infty X_n$.

{\tt Claim A}. {\it For every function $v:{\bf C}\to\mathbb R$ of the first Baire class there exists a continuous function $u:X\to\mathbb R$ such that $u(a,a,\dots)=v(a)$ for every $a\in {\bf C}$.}

{\tt Proof of Claim A.} Choose a sequence $(v_n)_{n=1}^\infty$ of continuous functions $v_n:{\bf C}\to\mathbb R$ which converges to $v$ pointwisely on ${\bf C}$. Using the uniform continuity of the functions $v_n$, we choose a strictly decreasing sequence of reals $\delta_n>0$ such that $|v_n(x')-v_n(x'')|<\frac{1}{n}$ for every $n\in\mathbb N$ and $x',x''\in {\bf C}$ with $|x'-x''|<\delta_n$. For every $n\in\mathbb N$ we put
$$
G_n=\{(x_k)_{k=1}^{\infty}\in X: \max\{|x_i-x_j|:1\leq i,j\leq n\}<\delta_n\}
$$
and
$$
F_n=\{(x_k)_{k=1}^{\infty}\in X: \max\{|x_i-x_j|:1\leq i,j\leq n\}\leq\delta_n\}.
$$
It is evident  that every set $G_n$ is open and every set $F_n$ is closed  in ${\bf C}^\omega$. We put $v_0(x)=0$ for every $x\in {\bf C}$ and
$$
\Delta=\{(x,x,\dots):x\in{\bf C}\}.
$$
For every $n\in\mathbb N$ we choose a continuous function $\varphi_n:{\bf C}^\omega\to[0,1]$ such that $\varphi_n(X\setminus G_{n})\subseteq \{1\}$ and $\varphi_n(F_{n+1})\subseteq \{0\}$ and consider the function
$u:X\to \mathbb R$,
$$
 u(x)=\left\{\begin{array}{lll}
                         v_0(x), & x\in X\setminus F_1\\
                         \varphi_n(x)v_{n-1}(x_1)+(1-\varphi_n(x)v_n(x_1), & x=(x_1,x_2,\dots)\in F_{n}\setminus F_{n+1}\\
                         v(a), & x=(a,a,\dots)\in \Delta.
                       \end{array}
 \right.
 $$
 It easy to see that the function $u$ is continuous at every point $x\in X\setminus \Delta$. It remains to verify the continuity of $u$ at every point $x\in \Delta$. Fix $a\in{\bf C}$ and $\varepsilon>0$. We choose $m\in\mathbb N$ such that $a\in A_m$ and let $n_0\geq\max\{m,\frac{2}{\varepsilon}+1\}$ be such that $|v_n(a)-v(a)|<\frac{\varepsilon}{2}$ for all $n\geq n_0-1$. Now we consider the neighborhood
 $$
 U=\{(x_n)_{n=1}^{\infty}\in X:x_{m}=a\,\,{\rm and}\,\,\max\{|x_i-x_j|:1\leq i,j\leq n_0\}\leq\delta_{n_0}\}
 $$
of the point $x_0=(a,a,\dots)$ in  $X$. Fix $x=(x_n)_{n=1}^\infty\in U\setminus \{x_0\}$ and take $k\in\mathbb N$ such that $x\in F_{k}\setminus F_{k+1}$. Notice that $k\geq n_0$, because $U\subseteq F_{n_0}$. Therefore, $$|v_{k}(a)-v(a)|<\frac{\varepsilon}{2}\,\,\,\,\,\,{\rm and}\,\,\,\,\,\, |v_{k-1}(a)-v(a)|<\frac{\varepsilon}{2}.$$ Since $x_m=a$, $|x_1-a|<\delta_k$ and, moreover $|x_1-a|<\delta_{k-1}$. Therefore, $$|v_k(x_1)-v_k(a)|<\tfrac{1}{k}\leq \tfrac{1}{n_0}\leq \tfrac{\varepsilon}{2}$$
and
$$|v_{k-1}(x_1)-v_{k-1}(a)|<\tfrac{1}{k-1}\leq \tfrac{1}{n_0-1}\leq \tfrac{\varepsilon}{2}.$$
Thus,
$$|v_{k}(x_1)-v(a)|<\varepsilon\,\,\,\,\,\,{\rm and}\,\,\,\,\,\, |v_{k-1}(a)-v(a)|<\varepsilon.$$
Now we have
$$
|u(x)-v(a)|=|\varphi_k(x)(v_{k-1}(x_1)-v(a))+(1-\varphi_k(x)(v_k(x_1)-v(a)|\leq
$$
$$
\varphi_k(x)|v_{k-1}(x_1)-v(a)|+(1-\varphi_k(x)|v_k(x_1)-v(a)|<\varphi_k(x)\varepsilon +(1-\varphi_k(x)\varepsilon=\varepsilon.
$$
{\tt Claim B}. {\it For every function $w:{\bf C}\to\mathbb R$ of the second Baire class the function $h:X\to\mathbb R$
$$
 h(x)=\left\{\begin{array}{ll}
                         0, & x\in X\setminus \Delta\\
                         w(a), & x=(a,a,a,\dots)\in \Delta
                       \end{array}
 \right.
 $$
 is a function of the first Baire class.}

{\tt Proof of Claim B.} We choose a sequence $(w_n)_{n=1}^\infty$ of Baire one functions $w_n:{\bf C}\to\mathbb R$ which converges to $w$ pointwisely on ${\bf C}$. By Claim A we can construct a sequence $(h_n)_{n=1}^\infty$ of continuous functions $h_n:X\to\mathbb R$ such that $h_n(a,a,\dots)=w_n(a)$ for all $a\in X$ and $h_n(x)=0$ for all $n\in\mathbb N$ and $x\in X\setminus C_n$, where
$$
C_n=\{(x_k)_{k=1}^{\infty}\in X: \max\{|x_i-x_j|:1\leq i,j\leq n\}\leq\tfrac1n\}.
$$
Clearly, the sequence $(h_n)_{n=1}^\infty$ converges to $h$ pointwisely on $X$.

Now we prove the condition $(ii)$.  Lemma \ref{lem:function-second} implies the existence of a function $g:{\bf C}\to\mathbb R$ of the second Baire class such that for every countable set $A\subseteq {\bf C}$ the restriction $g|_{{\bf C}\setminus A}$ does not belong to the first Baire class. According to Claim B, the function $f:X\to\mathbb R$
$$
 f(x)=\left\{\begin{array}{ll}
                         0, & x\in X\setminus \Delta\\
                         g(a), & x=(a,a,a,\dots)\in \Delta
                       \end{array}
 \right.
 $$
 is a function of the first Baire class.

Now we show that (ii) holds. Suppose to the contrary that $f$ is the pointwise limit of a sequence   of  functions $f_n\in{\rm CF}(X)$. Without loss of generality we may assume that for every $n\in\mathbb N$ the function $f_n$ depends on the first $n$ coordinates. For every $n\in\mathbb N$ and $z\in{\bf C}$ we put
$$
g_n(z)=f_n(z,z,\dots).
$$
Fix $n\in\mathbb N$. Since  $f_n$ depends on the first $n$ coordinates, the function $g_n:{\bf C}\to\mathbb R$ is continuous at every point $z\in{\bf C}\setminus(\bigcup_{k=1}^n A_k)$. It follows from condition (2) of Lemma \ref{lem:set-sequence} that every closed set $F\subseteq \bigcup_{k=1}^n A_k$ is at most countable. Therefore, an $F_\sigma$-set $D_n$ of all discontinuity points of $f_n$ is at most countable too.

Hence, the set $D=\bigcup_{n=1}^\infty D_n$ is at most countable, every function $g_n$ is continuous at each point $z\in {\bf C}\setminus D$ and the sequence $(g_n)_{n=1}^\infty$ converges to $g$ pointwisely on ${\bf C}$, which contradicts to the choice of $g$.
\end{proof}

\begin{remark}\label{r.1}
$1)$ It easy to see that for a product $X=\prod_{n=1}^\infty X_n$ of a sequence $(X_n)_{n=1}^\infty$ of topological spaces $X_n$ and for a continuous map $f:X\to Y$ with values in a topological space $Y$ there exists a sequence $(f_n)_{n=1}^\infty$ of  functions $f_n\in{\rm CF}(X,Y)$ which converges to $f$ pointwisely on $X$.

$2)$ In fact it was proved in Theorem~\ref{thm:L_one_is_CF_limit} that for appropriate spaces $X\subseteq \prod_{n=1}^\infty X_n$ and $Y$ for every mapping $f:X\to Y$ of the first Baire class there exists a sequence  of  functions from ${\rm CF}(\prod_{n=1}^\infty X_n,Y)$ such that the sequence of the restrictions $f_n|_{X}$ converges to $f$ pointwisely on $X$.

$3)$ For the sequence $(X_n)_{n=1}^\infty$ of topological spaces $X_n$ from Theorem \ref{th:example}, for the subspace $X=\Delta$ and  $f\in {\rm CF}(X)$ with $f(x,x,\dots)=g(x)$, there does not exist a sequence   of  functions $f_n:\prod_{n=1}^\infty X_n\to \mathbb R$ such that the sequence of the restrictions $f_n|_{X}$ converges to $f$ pointwisely.
\end{remark}

{\small
}
\end{document}